\newtheorem{theorem}{Theorem}[section]
\theoremstyle{definition}
\theoremstyle{definition}
\newtheorem{lem}{Lemma}[section]
\newtheorem*{rem}{Remark}
\newtheorem{alg}[theorem]{Algorithm}
\newcommand{\R}{\mathbb{R}}
\newcommand{\ukone}{u^{k+1}}
\newcommand{\uk}{u^k}
\newcommand{\ekone}{e^{k+1}}
\newcommand{\ek}{e^k}
\newcommand{\pkone}{p^{k+1}}
\newcommand{\tkone}{T^{k+1}}
\newcommand{\etkone}{e_T^{k+1}}
\newcommand{\etk}{e_T^k}
\newcommand{\utkone}{\tilde{u}^{k+1}}
\newcommand{\ttkone}{\tilde{T}^{k+1}}
\newcommand{\ettkone}{\tilde{e}_T^{k+1}}
\newcommand{\eutkone}{\tilde{e}^{k+1}}
\author{Elizabeth Hawkins\thanks{\small
	School of Mathematical and Statistical Sciences, Clemson University, Clemson, SC, 29364, evhawki@clemson.edu.}
}
\title{Anderson-Picard based nonlinear preconditioning of the Newton iteration for non-isothermal flow simulations}
\begin{document}
\maketitle

\begin{abstract}
We propose, analyze, and test a nonlinear preconditioning technique to improve the Newton iteration for non-isothermal flow simulations. We prove that by first applying an Anderson accelerated Picard step, Newton becomes unconditionally stable (under a uniqueness condition on the data) and its quadratic convergence is retained but has less restrictive sufficient conditions on the Rayleigh number and initial condition's accuracy. Since the Anderson-Picard step decouples the equations in the system, this nonlinear preconditioning adds relatively little extra cost to the Newton iteration (which does not decouple the equations). Our numerical tests illustrate this quadratic convergence and stability on multiple benchmark problems. Furthermore, the tests show convergence for significantly higher Rayleigh number than both Picard and Newton, which illustrates the larger convergence basin of Anderson-Picard based nonlinear preconditioned Newton that the theory predicts. 
\end{abstract}

\section{Introduction}
The non-isothermal flow simulations are a multiphysics model of natural convection, and are commonly used to model flows with non-constant density or temperature. We consider this system, often called the Boussinesq system, on a finite connected domain $\Omega\subset \R^d$ $(d=2,3)$ with $\partial\Omega=\Gamma_1\cup\Gamma_2$, and is given by
\begin{equation}\label{Bouss}
\begin{cases}
u\cdot \nabla u +\nabla p -\nu\Delta u &= f +R_i  (0 ~ T)^T \text{ in }\Omega,\\
\nabla\cdot u&=0 \text{ in }\Omega,\\
u\cdot \nabla T-\kappa \Delta T&=g \text{ in }\Omega,\\
\end{cases}
\end{equation}
with boundary conditions
\begin{equation}\label{BoussBC}
\begin{cases}
u= 0 &\text{ on }\partial\Omega,\\
T=0 &\text{ on }\Gamma_1,\\
\nabla T\cdot n=0 &\text{ on }\Gamma_2.
\end{cases}
\end{equation}
Here, $u$ is the fluid velocity, $p$ is the pressure, $T$ is the temperature (or density), $\nu>0$ is the kinematic viscosity of the fluid, $\kappa>0$ is the thermal diffusivity, $f$ is the external forcing term, and $R_i>0$ is the Richardson number which accounts for the gravitational force and thermal expansion of the fluid. This model can be observed in nature such as in atmospheric models and katabatic winds, and is also observed in industry for natural ventilation, dense gas dispersion, insulation with double pane window, and solar collectors \cite{Songul}.\\ 


The boundary conditions we state above are no-slip for velocity and mixed homogenous Dirichlet and homogenous Neumann for temperature (the latter of which corresponds to perfect insulation), but our results are extendable to most common boundary conditions. Important physical constants are the Reynolds number $Re$, Prandtl number $Pr$ which describes the ratio of kinematic viscosity and thermal diffusivity, and the Rayleigh number $Ra$:
\begin{equation*}
Re=\nu^{-1}~~~~Pr=\frac{\nu}{\kappa} ~~~~ Ra=Ri ~ Re^2 ~ Pr.\\
\end{equation*}
The non-isothermal flow system \eqref{Bouss}-\eqref{BoussBC} is known to be well-posed under a small data condition \cite{Layton89}, however uniqueness is lost for high $Ra$.\\

The Picard iteration and Newton iteration are common iterative methods for solving the system \eqref{Bouss}-\eqref{BoussBC}. The Picard iteration for this system is given by 
\begin{equation}\label{PicardIter}
\begin{cases}
\uk\cdot \nabla \ukone +\nabla \pkone -\nu\Delta \ukone &= f +R_i (0 ~\tkone)^T,\\
\nabla\cdot\ukone&=0,\\
\uk\cdot \nabla \tkone-\kappa \Delta \tkone&=g,
\end{cases}
\end{equation}
with boundary conditions
\begin{equation}\label{IterBC}
\begin{cases}
\ukone= 0 &\text{ on }\partial\Omega,\\
\tkone=0 &\text{ on }\Gamma_1,\\
\nabla \tkone\cdot n=0 &\text{ on }\Gamma_2,
\end{cases}
\end{equation}
and is known to admit stable solutions for any problem data (see Section 2).  Furthermore, provided the data satisfies a smallness condition, the solution to Picard is unique and the iteration convergences linearly for any initial guess (see Section 2). Note that the system \eqref{PicardIter} decouples the equations because the heat transport equation is independent of velocity. While this all makes Picard very attractive with respect to efficiency at each iteration, the linear convergence is a potential drawback \cite{PNslow, transport,NC1}.\\

The Newton iteration for the system \eqref{Bouss}-\eqref{BoussBC} is given by
\begin{equation}\label{NewtonIter}
\begin{cases}
\ukone\cdot \nabla \uk + \uk\cdot\nabla\ukone+\nabla \pkone -\nu\Delta \ukone &= f +R_i (0 ~\tkone)^T+\uk\cdot\nabla\uk,\\
\nabla\cdot\ukone&=0,\\
\ukone\cdot \nabla T^k + \uk\cdot\nabla\tkone-\kappa \Delta \tkone&=g+\uk\cdot\nabla T^k,
\end{cases}
\end{equation}
with boundary conditions \eqref{IterBC}. This iteration will be shown in Section 2 to be well-posed and converge quadratically, but only for a very good initial guess and sufficiently small data (the sufficient conditions are more restrictive than for Picard). Newton is also more computationally expensive at each iteration compared to Picard since it does not decouple, and so one must solve fully coupled linear systems at each iteration. \\

The purpose of this paper is to consider the iteration defined by, at each step, first applying Anderson accelerated Picard and then applying Newton (i.e. Anderson-Picard based nonlinearly preconditioned Newton). Our intent of combining the iterations in this way is to unite the speed of Newton with the stability and robustness of Picard. This method is simple to implement and we show it has an improved robustness compared to Newton (both with respect to stability and the size of the convergence basin), but still converges quadratically. 
This work thus fits a theme of recent and fundamentally important research on nonlinear preconditioning \cite{Keyes1,Keyes2,Keyes3}.\\

The general idea of applying Anderson-Picard as a preconditioner was first done in 2024 when it was applied to the Navier-Stokes Equations (NSE) \cite{PRTX24}. The purpose of this paper is to extend this important yet easy to implement idea to non-isothermal flow. The difference between NSE and non-isothermal flow simulations is foremost that it is coupled with another equation which incorporates temperature (or density) into the model. This is further complicated by the extra nonlinear terms that arise in this coupled equation. This consequently gives rise to extra difficulties in the analysis and computation, especially for iterative methods like Newton where the equations remain fully coupled. In other words, it is not at all a straightforward extension. Moreover, the analysis provided in this paper will reveal that the convergence basin for Anderson-Picard-Newton is dependent upon only the initial conditions for velocity. Note that for non-preconditioned Newton it depends on both the initial conditions of velocity and temperature (or density). This is concurrent with the accuracy of Anderson-Picard-Newton depending more critically on velocity in a similar manner.\\


We show herein analytically and numerically that the (Anderson-)Picard based nonlinearly preconditioned Newton, shortened to (AA-)Picard-Newton, converges quadratically, with better convergence properties than Newton as well as improved stability. We restrict our analysis to Picard-Newton, i.e no Anderson, and note that the results are extendable to AA-Picard-Newton with some extra technical difficulties. We also provide numerical results for both AA-Picard-Newton and Picard-Newton that show a significant improvement compared to Newton alone. \\ 

This paper is arranged as follows. We provide analysis of the well-posedness, stability, and convergence of Picard and Newton iterations in section 2. While much of this is known, the proofs and results will allow for a better comparison and analysis of Picard-Newton. In section 3 we give analysis on the stability and convergence of Picard-Newton. Following this in section 4, we provide numerical results for both AA-Picard-Newton and Picard-Newton applied to some benchmark tests.

\section{Preliminaries}

Let $\Omega\subset\R^d$ ($d=2\text{ or }3$) be connected with boundary $\partial\Omega=\Gamma_1\cup\Gamma_2$ where $meas(\Gamma_1\cap\Gamma_2)=0$.  We denote the $L^2$ inner product and the $L^2$ norm on $\Omega$ as $(\cdot,\cdot)$ and $\|\cdot \|$, respectively. We also denote the solution spaces for velocity, pressure, temperature, and divergence free velocity, as 
\begin{align*}
X&=\{v\in H^1(\Omega): v =0 \text{ on }\partial\Omega\},\\
Q&=\{q\in L^2(\Omega): \int_{\Omega} q=0 \},\\
D&=\{w\in H^1(\Omega): w=0\text{ on }\Gamma_1 \text{ and }\nabla w\cdot n=0\text{ on }\Gamma_2\},\\
V&=\{ v\in X: (\nabla\cdot v, q)=0 ~\forall q\in Q\}.
  \end{align*}
  Let $X^\ast$, $V^\ast$, and $D^\ast$ be the dual spaces of $X$, $V$, and $D$, respectively. With slight abuse of notation, we also use $(\cdot,\cdot)$ to denote the dual pairing of $X$ and $X^\ast$, $V$ and $V^\ast$, and $D$ and $D^\ast$. Then for example, the dual norm of $X$ is defined by
  \begin{equation*}
\|F\|_{X^\ast}:=\sup_{z\in X}   \frac{(F,z)}{\| z\|_X}.
  \end{equation*}
  We also recall the Poincar\'e inequality for $z\in X \text{ or } D$, given by
  \begin{equation*}
  \|z\| \leq C_p \|\nabla z\|,
  \end{equation*}
  where $C_p>0$ is a constant depending only on the size of the domain $\Omega$.\\
  
 Define the trilinear functionals $b:X\times X\times X\rightarrow \R$ and $\hat{b}: X\times D\times D\rightarrow \R$ by :
 \begin{align*}
 b(u,v,w)&:=(u\cdot \nabla v,w)+\frac{1}{2}((\nabla\cdot u)v,w),\\
 \hat{b}(u,v,w)&:=(u\cdot \nabla s,t)+\frac{1}{2}((\nabla\cdot u)s,t).
 \end{align*}
 Then $b(u,v,w)$ and $\hat{b}(u,s,t)$ are skew-symmetric, i.e.,
 $$b(u,v,v)= 0 \text{ and } \hat{b}(u,s,s)=0.$$
 Furthermore, if $u$ is divergence free, then $b(u,v,w)= (u\cdot\nabla v,w)$, and thus $(u\cdot\nabla v,v)=0$. 
 A well known bound for $b$ and $\hat{b}$ that is used herein is \cite{Laytonbook}: $\forall u,v,w \in X$, $s,t\in D$, $\exists C_s>0$ depending only on $|\Omega|$ such that
 \begin{align}\label{bbound}
 |b(u,v,w)|&\leq C_s \|\nabla u\| \|\nabla v\| \|\nabla w\|,\\
  |\hat{b}(u,s,t)|&\leq C_s \|\nabla u\| \|\nabla s\| \|\nabla t\|.
 \end{align}
 The use of $b$ and $\hat{b}$ will allow for $X,Q,D$ to be replaced by finite dimensional subspaces and have the exact same analysis of the paper hold.
 
 \subsection{Anderson-Acceleration}
Anderson Acceleration (AA) is an extrapolation technique which utilizes previously computed iterates to construct the next solution. It is known to improve convergence for linearly convergent fixed point iterations like Picard \cite{PRX19,EPRX20,PR21, TK15, K03,EPRX20}. Therefore, it is a natural to precondition Newton with Anderson-Picard. 

AA is defined as follows \cite{PRX19}. Let $g:Y\rightarrow Y$ be a fixed point operator for a Hilbert space $Y$.
\begin{alg}
\begin{algorithmic}
\State AA for depth $m\geq 0$
\State Step 0: Choose $x_0\in Y$
\State Step 1: Find $w_1\in Y$ such that $w_1=g(x_0)-x_0$. Set $x_1=x_0+w_1$.
\State Step k: For $k=2,3,...$. Set $m_k=\min\{k-1,m\}$.
\State ~~[a.] Find $w_k=g(x_{k-1})-x_{k-1}$.
\State ~~[b.] Solve the minimization problem for $\{\alpha_j^k\}_{k-m_k}^{k-1}$
$$\min \| \left(1-\sum_{j=k-m_k}^{k-1} \alpha_j^k\right)w_k + \sum_{j=k-m_k}^{k-1}\alpha_j^k w_j \|_Y .$$
\State ~~[c.] Set 
$$x_k=(1-\sum_{j=k-m_k}^{k-1} \alpha_j^k)(x_{k-1}+w_k)+ \sum_{j=k-m_k}^{k-1} \alpha_j^k \alpha_j^k (x_{j-1}+w_j) $$,
\end{algorithmic}
\end{alg}
where $w_j=g(x_{j-1})-x_{j-1}$ may be referred to as the update step or also as the nonlinear residual. Setting $m=0$ returns the original fixed point iteration.

  \subsection{Non-isothermal flow background}
The weak form of the non-isothermal flow system \eqref{Bouss}-\eqref{BoussBC} is: Given $f\in X^\ast$ and $g\in D^\ast$, $\text{find }(u,p,T)\in (X,Q,D) \text{ satisfying}$ $\forall (v,q,w)\in (X,Q,D)$,
\begin{equation*}
\begin{cases}
b(u, u,v)  +(\nabla p, v)+\nu(\nabla u,\nabla v) &= (f,v) +R_i ((0 ~ T)^T,v),\\
(\nabla\cdot u, q)&=0,\\
\hat{b}(u, T,w)+\kappa (\nabla T,\nabla w)&=(g,w).
\end{cases}
\end{equation*}
The spaces $(X,Q)$ satisfy an inf-sup condition, and thus the weak formulation is equivalently simplified using the solution space $V$: Find $(u,T)\in (V,D)$ satisfying $\forall (v,w)\in (V,D)$,
\begin{equation}\label{boussWF}
\begin{cases}
b(u, u,v)  +\nu(\nabla u,\nabla v) &= (f,v) +R_i ((0 ~ T)^T,v),\\
\hat{b}(u, T,w)+\kappa (\nabla T,\nabla w)&=(g,w).
\end{cases}
\end{equation}

\begin{lem}\label{BoussStability}
Any solution to the non-isothermal flow system \eqref{boussWF} satisfies the a priori estimate
\begin{align}
\|\nabla T\|&\leq \kappa^{-1}\|g\|_{D^\ast}=: M_2, \label{TBound}\\
\|\nabla u\|&\leq \nu^{-1}\| f\|_{V^\ast} +R_i C_p^2\nu^{-1} M_2=: M_1\label{uBound}.
\end{align}
\end{lem}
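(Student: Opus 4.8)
The plan is to test the two equations of the weak formulation \eqref{boussWF} with the solution itself and exploit the skew-symmetry of the trilinear forms. Starting with the temperature equation: since the claim is that $T$ is bounded independently of $u$, I would choose the test function $w = T \in D$. The convective term vanishes because $\hat{b}(u,T,T) = 0$, leaving $\kappa \|\nabla T\|^2 = (g,T)$. Bounding the right-hand side by the dual norm, $(g,T) \leq \|g\|_{D^\ast}\|\nabla T\|$, and dividing through by $\kappa\|\nabla T\|$ gives \eqref{TBound} immediately (with the trivial case $\nabla T = 0$ handled separately).

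Next, for the velocity equation I would test with $v = u \in V$. Since $u \in V$ is divergence free, $b(u,u,u) = (u\cdot\nabla u, u) = 0$, so the nonlinear term drops out and we are left with $\nu\|\nabla u\|^2 = (f,u) + R_i((0~T)^T, u)$. The first term is bounded by $\|f\|_{V^\ast}\|\nabla u\|$. For the buoyancy term I would write $((0~T)^T, u) \leq \|T\|\,\|u\| \leq C_p^2 \|\nabla T\|\,\|\nabla u\|$ by Cauchy--Schwarz and two applications of the Poincar\'e inequality (one on $T \in D$, one on $u \in X$). Substituting the bound $\|\nabla T\| \leq M_2$ already obtained, dividing by $\nu\|\nabla u\|$, and rearranging yields \eqref{uBound}.

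There is essentially no hard part here — this is a standard energy argument. The only points requiring a little care are: (i) the pressure term genuinely disappears because we work in the divergence-free space $V$ rather than $X \times Q$, so no inf-sup machinery is needed at this stage; (ii) the degenerate cases $\nabla T = 0$ or $\nabla u = 0$ must be acknowledged so that division is legitimate (in those cases the stated bounds hold trivially since the right-hand sides are nonnegative); and (iii) the constant $C_p$ in the buoyancy estimate appears squared because Poincar\'e is invoked twice, and it is the $X$- (equivalently $D$-) Poincar\'e constant that enters, consistent with the statement of the lemma. The estimates are a priori in the sense that they presume a solution exists; existence itself would follow from a separate fixed-point or Galerkin argument, which this lemma does not address.
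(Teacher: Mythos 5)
Your proof is correct and follows essentially the same route as the paper: test with $v=u$ and $w=T$, use skew-symmetry to eliminate the trilinear terms, bound the right-hand sides via dual norms, Cauchy--Schwarz, and Poincar\'e, and substitute the temperature bound into the velocity bound. The remarks on the degenerate cases and the squared Poincar\'e constant are minor refinements not made explicit in the paper but do not change the argument.
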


\begin{proof}
We let $v=u$ and $w=T$ in \eqref{boussWF} then using skew-symmetry gives us
\begin{equation}\label{B1}
\begin{cases}
\nu\|\nabla u\|^2 =Ri( (0~T)^T,u)+(f,u),\\
\kappa\|\nabla T\|^2 = (g,T).
\end{cases}
\end{equation}
We upper bound the right hand side terms using the dual space norms, Cauchy-Schwarz, and Poincar\'e, which yields
\begin{align*}
Ri( (0~T)^T,u) &\leq C_p^2 Ri \|\nabla T\| \|\nabla u\|,\\
(f,u) &\leq \|f\|_{V^\ast} \|\nabla u\|, \\
(g,T) &\leq \|g\|_{D^\ast}\|\nabla T\|.
\end{align*}
Using these bounds in \eqref{B1} and reducing provides us with
\begin{equation*}
\begin{cases}
\|\nabla u\| \leq \nu^{-1}\| f\|_{V^\ast} +R_i C_p^2\nu^{-1}\|  \nabla T\|, \\
\|\nabla T\| \leq \kappa^{-1}\|g\|_{D^\ast}, 
\end{cases}
\end{equation*}
and using the second of these bounds in the first,
\begin{equation*}
\|\nabla u\| \leq \nu^{-1}\| f\|_{V^\ast} +R_i C_p^2\nu^{-1} \kappa^{-1}\|g\|_{D^\ast}.
\end{equation*}
This proves the result.
\end{proof}

The proof gives the bounds \eqref{TBound} and \eqref{uBound} for $T$ and $u$ respectively, and will be used throughout the paper for solutions to the non-isothermal flow system. These results are similar to those found in \cite{Layton89}, and depending on how the problem is formulated (e.g. Richardson number $\neq 1$ or Prandtl number $\neq 1$), the constants can differ but the results are equivalent. It is known that \eqref{boussWF} admits solutions for any $Ra>0$ \cite{Layton89}, and this can be proven in the same way as the steady Navier-Stokes existence proof using the Leray-Schauder theorem \cite{Laytonbook}. Uniqueness of \eqref{boussWF}, however, requires a smallness assumption on the data. Below is a sufficient condition on the problem data that produces uniqueness.

\begin{lem}
Let $\alpha_1=C_s \nu^{-1} M_1$ and $\alpha_2=C_s \kappa^{-1} M_2$. If $C_p^2 \nu^{-1}R_i , \alpha_2 + \alpha_1<1$, then solutions to \eqref{boussWF} are unique.
\end{lem}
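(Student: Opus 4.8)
The plan is to argue by a standard energy/contraction estimate on the difference of two solutions. Suppose $(u_1,T_1)$ and $(u_2,T_2)$ both solve \eqref{boussWF}; by Lemma~\ref{BoussStability} each satisfies $\|\nabla u_i\|\le M_1$ and $\|\nabla T_i\|\le M_2$. Set $\phi:=u_1-u_2\in V$ and $\psi:=T_1-T_2\in D$. First I would subtract the two momentum equations and the two temperature equations, rewriting the nonlinear differences as $b(u_1,u_1,v)-b(u_2,u_2,v)=b(u_1,\phi,v)+b(\phi,u_2,v)$ and $\hat b(u_1,T_1,w)-\hat b(u_2,T_2,w)=\hat b(u_1,\psi,w)+\hat b(\phi,T_2,w)$, to obtain the error system satisfied by $(\phi,\psi)$ for all $(v,w)\in(V,D)$.

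Next I would test the momentum error equation with $v=\phi$ and the temperature error equation with $w=\psi$. Skew-symmetry of $b$ and $\hat b$ eliminates $b(u_1,\phi,\phi)$ and $\hat b(u_1,\psi,\psi)$, leaving $\nu\|\nabla\phi\|^2=-b(\phi,u_2,\phi)+R_i((0~\psi)^T,\phi)$ and $\kappa\|\nabla\psi\|^2=-\hat b(\phi,T_2,\psi)$. Bounding the trilinear terms with \eqref{bbound}, the buoyancy term with Cauchy--Schwarz and Poincar\'e, and inserting the a priori bounds $M_1,M_2$, this yields the two coupled inequalities $(\nu-C_sM_1)\|\nabla\phi\|\le R_iC_p^2\|\nabla\psi\|$ and $\kappa\|\nabla\psi\|\le C_sM_2\|\nabla\phi\|$, i.e. $\|\nabla\psi\|\le\alpha_2\|\nabla\phi\|$.

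To finish I would chain these estimates. Since $\alpha_1+\alpha_2<1$ forces $\alpha_1<1$, the coefficient $\nu-C_sM_1=\nu(1-\alpha_1)$ is strictly positive, so dividing gives $\|\nabla\phi\|\le\frac{C_p^2\nu^{-1}R_i}{1-\alpha_1}\|\nabla\psi\|\le\frac{C_p^2\nu^{-1}R_i\,\alpha_2}{1-\alpha_1}\|\nabla\phi\|$. The hypothesis $C_p^2\nu^{-1}R_i<1$ bounds the prefactor by $\frac{\alpha_2}{1-\alpha_1}$, which is $<1$ exactly because $\alpha_1+\alpha_2<1$; hence $\|\nabla\phi\|=0$, so $\phi=0$ by Poincar\'e, and then $\|\nabla\psi\|\le\alpha_2\|\nabla\phi\|=0$ gives $\psi=0$. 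Since the pressure is recovered uniquely from the inf-sup condition, the triple $(u,p,T)$ is unique.

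I do not expect a genuine obstacle here: the only points requiring care are (i) confirming positivity of $\nu-C_sM_1$ before dividing, which is where $\alpha_1<1$ is used, and (ii) combining both parts of the smallness hypothesis so that the composed contraction constant is provably $<1$. In fact the argument shows the weaker condition $C_p^2\nu^{-1}R_i\,\alpha_2+\alpha_1<1$ already suffices, and the stated two-part hypothesis is a convenient sufficient form of it.
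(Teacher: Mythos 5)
Your proposal is correct and follows essentially the same route as the paper: subtract the two solutions, test with the differences $\phi$ and $\psi$, use skew-symmetry, \eqref{bbound}, Poincar\'e, and the a priori bounds $M_1,M_2$ to arrive at $(1-\alpha_1)\|\nabla\phi\|\le C_p^2\nu^{-1}R_i\|\nabla\psi\|$ and $\|\nabla\psi\|\le\alpha_2\|\nabla\phi\|$. The only difference is in the last algebraic step --- the paper adds the two inequalities to obtain $(1-\alpha_1-\alpha_2)\|\nabla\phi\|+(1-C_p^2\nu^{-1}R_i)\|\nabla\psi\|\le 0$, whereas you substitute one into the other; your remark that the weaker condition $\alpha_1+C_p^2\nu^{-1}R_i\,\alpha_2<1$ already suffices is a valid minor sharpening.
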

\begin{proof}
Supposing two solutions $(u_1,T_1)$ and $(u_2,T_2)$ to \eqref{boussWF} exist, define $e_u= u_1-u_2$ and $e_T=T_1-T_2$. Now subtracting the systems with these two solutions gives $\forall v\in V, w\in D$,
\begin{equation*}
\begin{cases}
b(u_1,e_u,v)+b(e_u,u_2,v)+\nu(\nabla e_u,\nabla v)&= Ri( (0~e_T)^T,v),\\
\hat{b}(u_1, e_T,w)+\hat{b}(e_u,T_2,w)+\kappa(\nabla e_T,\nabla w)&=0.
\end{cases}
\end{equation*}
Taking $v=e_u$ and $w=e_T$ vanishes two nonlinear terms and leaves
\begin{equation*}
\begin{cases}
\nu\|\nabla e_u\|^2&= Ri( (0~e_T)^T,e_u)-b(e_u,u_2,e_u)\\
&\leq C_p^2 R_i \|\nabla e_T\|\|\nabla e_u\| + C_s\|\nabla e_u\|^2\|\nabla u_2\|,\\
&\\
\kappa\|\nabla e_T\|^2&=-\hat{b}(e_u,T_2,e_T)\\
&\leq C_s \|\nabla T_2\| \|\nabla e_u\|\|\nabla e_T\|.
\end{cases}
\end{equation*}
Next, using the bounds \eqref{uBound} and \eqref{TBound} and simplifying gives
\begin{equation}\label{eq:B3}
\begin{cases}
\|\nabla e_u\|&\leq C_p^2 \nu^{-1}R_i \|\nabla e_T\|+ \alpha_1\|\nabla e_u\|,\\
\|\nabla e_T\|&\leq \alpha_2 \|\nabla e_u\|.
\end{cases}
\end{equation}
Adding these and simplifying results in
\begin{equation*}
(1-\alpha_1-\alpha_2)\|\nabla e_u\|+(1-C_p^2 \nu^{-1}R_i )\|\nabla e_T\|\leq 0.\\
\end{equation*}
This provides the uniqueness of the velocity due to the assumption on the data. With this, uniqueness of the temperature follows immediately from the second bound in \eqref{eq:B3}.
\end{proof}

\subsection{Picard iteration}
The weak formulations for Picard for the non-isothermal flow system takes the form: Find $(\ukone,\tkone)\in V\times D$ satisfying $
\forall (v,w)\in V\times D$,
\begin{equation}\label{PicardWF}
\begin{cases}
b(\uk, \ukone,v)  +\nu(\nabla \ukone,\nabla v) &= (f,v) +R_i ((0 ~\tkone)^T,v),\\
\hat{b}(\uk, \tkone,w)+\kappa (\nabla \tkone,\nabla w)&=(g,w).
\end{cases}
\end{equation}
Note that the Picard iteration decouples the temperature equation and thus solving \eqref{PicardWF} is a two-step process where one first solves a scalar convection-diffusion problem and then an Oseen problem. Effective preconditioners for these linear systems exist in the literature \cite{benzi,elman:silvester:wathen,HR13,BB12}.

\begin{lem}\label{Lemma:PicardBound}
Any solution to the Picard iteration for the non-isothermal flow system satisfies the a priori estimate: for any $k=1,2...$,
\begin{align*}
 \|\nabla T^k\|\leq M_2,\\
\|\nabla \uk\| \leq M_1,\\
\end{align*}
\end{lem}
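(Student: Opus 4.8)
The plan is to replicate, essentially verbatim, the argument used to prove Lemma~\ref{BoussStability} for the continuous system, exploiting that the Picard iteration \eqref{PicardWF} retains exactly the structural features that made that proof work. Concretely, I would fix $k$ and test the two equations of \eqref{PicardWF} with $v=\ukone$ and $w=\tkone$, respectively. The only nonlinear terms appearing are $b(\uk,\ukone,\ukone)$ and $\hat b(\uk,\tkone,\tkone)$, and both vanish by the skew-symmetry identities $b(u,v,v)=0$ and $\hat b(u,s,s)=0$. It is worth emphasizing that this skew-symmetry holds for \emph{all} arguments in $X$ (respectively $D$), thanks to the $\tfrac12((\nabla\cdot u)v,w)$ correction built into $b$ and $\hat b$; in particular, the lagged velocity $\uk$ need not be divergence free for the cancellation to occur. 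This is precisely why the Picard iterate inherits the continuous a priori bounds despite the explicit (lagged) treatment of the convection terms.

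After the cancellations, one is left with
\begin{equation*}
\begin{cases}
\nu\|\nabla \ukone\|^2 = Ri( (0~\tkone)^T,\ukone)+(f,\ukone),\\
\kappa\|\nabla \tkone\|^2 = (g,\tkone),
\end{cases}
\end{equation*}
which is identical in form to \eqref{B1}. I would then bound the right-hand sides exactly as in Lemma~\ref{BoussStability}: Cauchy--Schwarz and the definition of the dual norms give $(g,\tkone)\le\|g\|_{D^\ast}\|\nabla\tkone\|$ and $(f,\ukone)\le\|f\|_{V^\ast}\|\nabla\ukone\|$, while Cauchy--Schwarz together with Poincar\'e yields $Ri((0~\tkone)^T,\ukone)\le C_p^2 R_i\|\nabla\tkone\|\|\nabla\ukone\|$. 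Dividing through, the second equation gives $\|\nabla\tkone\|\le\kappa^{-1}\|g\|_{D^\ast}=M_2$, and substituting this into the first gives $\|\nabla\ukone\|\le\nu^{-1}\|f\|_{V^\ast}+R_iC_p^2\nu^{-1}M_2=M_1$.

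I do not anticipate any real obstacle here: the argument is structurally a carbon copy of Lemma~\ref{BoussStability}, and the one point requiring a moment's care is simply noting that skew-symmetry applies to the lagged argument $\uk$ without any divergence-free hypothesis. In particular, no induction on $k$ is needed, since the resulting bounds depend only on the problem data $f$, $g$ and not on the previous iterates $\uk,T^k$; the estimate therefore holds for every $k=1,2,\dots$ by the same reasoning applied at each step. (If one prefers a uniform statement, one may note that the same computation with $v=u^1,\,w=T^1$ handles the $k=1$ base case, and the generic step is identical.)
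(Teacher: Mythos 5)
Your proposal is correct and is exactly the argument the paper intends: the paper's proof of this lemma is the one-line remark that it follows ``analogously to Lemma~\ref{BoussStability},'' and you carry out that analogous computation (test with $v=\ukone$, $w=\tkone$, kill the trilinear terms by skew-symmetry of $b$ and $\hat b$, then apply dual norms, Cauchy--Schwarz, and Poincar\'e). Your observation that the skew-symmetrized forms make the cancellation independent of any divergence-free property of the lagged iterate $\uk$, so that no induction is needed, is exactly the right point of care.
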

\begin{proof}
These results are proved analogously to those of Lemma \ref{BoussStability}.
\end{proof}


\begin{lem}
The Picard iteration \eqref{PicardWF} with data satisfying $\min\{1-\nu^{-1}\frac{C_p^2 R_i}{2} ,1-\kappa^{-1}\frac{C_p^2 R_i}{2} \}>0$, admits a unique solution. 
\end{lem}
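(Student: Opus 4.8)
The plan is to observe that, for a fixed previous iterate $\uk\in V$, the Picard system \eqref{PicardWF} is a \emph{linear} problem on the Hilbert space $V\times D$, and to dispatch it with the Lax--Milgram lemma applied to the bilinear form
\[
\mathcal{A}\big((u,T),(v,w)\big):=b(\uk,u,v)+\nu(\nabla u,\nabla v)+\hat{b}(\uk,T,w)+\kappa(\nabla T,\nabla w)-R_i\big((0~T)^T,v\big),
\]
with load functional $\mathcal{F}(v,w):=(f,v)+(g,w)$. First I would check boundedness of $\mathcal{A}$ on $(V\times D)^2$: the viscous and diffusive terms are trivially bounded, the trilinear terms are bounded via \eqref{bbound} since $\uk$ is a fixed element of $V$ (indeed $\|\nabla\uk\|\le M_1$ by Lemma \ref{Lemma:PicardBound}), and the buoyancy coupling is bounded by Cauchy--Schwarz together with Poincar\'e. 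Boundedness of $\mathcal{F}$ is immediate from the definitions of $\|\cdot\|_{V^\ast}$ and $\|\cdot\|_{D^\ast}$.

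The crux is coercivity, and this is where the data condition enters. Testing with $(v,w)=(u,T)$, the skew-symmetry identities $b(\uk,u,u)=0$ and $\hat{b}(\uk,T,T)=0$ annihilate both trilinear terms, leaving
\[
\mathcal{A}\big((u,T),(u,T)\big)=\nu\|\nabla u\|^2+\kappa\|\nabla T\|^2-R_i\big((0~T)^T,u\big).
\]
I would then bound the buoyancy term by Cauchy--Schwarz, Poincar\'e, and Young's inequality in the symmetric form $ab\le\tfrac12 a^2+\tfrac12 b^2$, obtaining $R_i\big((0~T)^T,u\big)\le \tfrac{C_p^2 R_i}{2}\big(\|\nabla u\|^2+\|\nabla T\|^2\big)$, hence
\[
\mathcal{A}\big((u,T),(u,T)\big)\ge\Big(\nu-\tfrac{C_p^2 R_i}{2}\Big)\|\nabla u\|^2+\Big(\kappa-\tfrac{C_p^2 R_i}{2}\Big)\|\nabla T\|^2 .
\]
Since $\nu-\tfrac{C_p^2 R_i}{2}=\nu\big(1-\nu^{-1}\tfrac{C_p^2 R_i}{2}\big)$ and likewise for $\kappa$, the hypothesis $\min\{1-\nu^{-1}\tfrac{C_p^2 R_i}{2},\,1-\kappa^{-1}\tfrac{C_p^2 R_i}{2}\}>0$ is exactly what makes both coefficients positive, giving coercivity with constant $\min\{\nu-\tfrac{C_p^2 R_i}{2},\,\kappa-\tfrac{C_p^2 R_i}{2}\}$ relative to the norm $\big(\|\nabla u\|^2+\|\nabla T\|^2\big)^{1/2}$ on $V\times D$. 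Lax--Milgram then yields a unique $(\ukone,\tkone)\in V\times D$ solving \eqref{PicardWF}; if the pressure $\pkone$ of the non-reduced formulation \eqref{PicardIter} is wanted, it is recovered from the resulting residual functional in the standard way using the inf--sup condition on $(X,Q)$.

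I do not anticipate a genuine obstacle here, only two points of care. First, $\mathcal{A}$ is nonsymmetric (both the trilinear terms and the buoyancy term break symmetry), so the Riesz representation theorem is insufficient and the full Lax--Milgram lemma must be invoked. Second, because the problem is linear, the coercivity estimate above delivers existence and uniqueness simultaneously; equivalently, one may read off uniqueness directly from the homogeneous version of the energy estimate (forcing $\|\nabla e_u\|=\|\nabla e_T\|=0$) and then invoke finite-dimensionality, which is the viewpoint consistent with replacing $X,Q,D$ by conforming discrete subspaces as the paper intends. Every analytic ingredient — the trilinear bounds \eqref{bbound}, skew-symmetry, Poincar\'e, and Young's inequality — is already in place from Section 2.
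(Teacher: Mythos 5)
Your proposal is correct and follows essentially the same route as the paper: both cast \eqref{PicardWF} as a linear problem on $V\times D$, verify continuity of the bilinear form and the load functional, obtain coercivity from skew-symmetry of the trilinear terms together with Cauchy--Schwarz, Poincar\'e, and Young's inequality (yielding the constant $\min\{\nu-\tfrac{C_p^2 R_i}{2},\,\kappa-\tfrac{C_p^2 R_i}{2}\}$), and conclude by Lax--Milgram. No gaps; your added remarks on nonsymmetry and pressure recovery are consistent with the paper's setting.
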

\begin{rem}
Due to Lemma \ref{Lemma:PicardBound} and since \eqref{PicardWF} is linear, solution uniqueness can be proven immediately. In the finite dimensional case, this will also imply existence of solutions.
\end{rem}

\begin{proof}
Let $Y= V\times D$ and at iteration $k+1$ define $A: Y\times Y\rightarrow \R$ and $F: Y\rightarrow \R$ by
\begin{align*}
A( (\hat{u} ,\hat{T}), (v,w)):&=b(\uk , \hat{u}, v) +\nu(\nabla \hat{u},\nabla v) + \hat{b}(\uk , \hat{T}, w) + \kappa (\nabla \hat{T},\nabla w)-R_i ((0 ~\hat{T})^T,v), \\
F((v,w))&= (f,v) + (g,w),
\end{align*}
so that the Picard iteration is given by $A( (\hat{u},\hat{T}), (v,w)) = F((v,w))$. Consider $A( (\hat{u},\hat{T}), (v,w))$. Using \eqref{bbound}, Cauchy-Schwarz, and Young's inequality we lower bound the equation as
\begin{align*}
A( (\hat{u},\hat{T}),(\hat{u},\hat{T})) &= b(\uk, \hat{u}, \hat{u}) +\nu\|\nabla \hat{u} \|^2+ \hat{b}(\uk, \hat{T}, \hat{T}) + \kappa \|\nabla \hat{T}\|^2-R_i ((0 ~\hat{T})^T,\hat{u})  \\
&\geq \nu\|\nabla \hat{u}\|^2 +  \kappa \|\nabla \hat{T}\|^2 -\frac{C_p^2 R_i}{2} \|\nabla \hat{T}\|^2 - \frac{C_p^2 R_i}{2}\|\nabla \hat{u}\|^2 \\
&\geq \min\{\nu-\frac{C_p^2 R_i}{2} ,\kappa-\frac{C_p^2 R_i}{2} \}\| (\hat{u},\hat{T})\|^2_Y.
\end{align*}
Hence $A$ is coercive. Continuity of $A$ and $F$ follow easily using the bounds and lemmas above. Thus Lax-Milgram applies and gives existence and uniqueness of \eqref{PicardWF}.\\
\end{proof}

We can now define the solution operator for the Picard iteration. Define $g_P:(V,D)\rightarrow (V,D)$ to be the solution operator for \eqref{PicardWF}: $g_P(u^k,T^k)=(u^{k+1},T^{k+1})$. Because the solutions to Picard are unique, this is a well defined operator.  \\

\begin{lem}\label{Lemma:PicardConv}
Consider the Picard iteration \eqref{PicardWF} with data satisfying $C_p^2\nu^{-1}R_i<1$ and $\alpha_1+\alpha_2 <1$. Then the iteration converges linearly with rate $\alpha_1+\alpha_2$. In particular we have
\begin{equation*}
 \|\nabla (T-T^{k+1}) \| \leq  \alpha_2 \|\nabla(u-u^k)\|,
\end{equation*}
and
\begin{equation*}
\|\nabla (u-u^{k+1})\| \leq (\alpha_1+ \alpha_2) \|\nabla(u-u^k)\|.
\end{equation*}
\end{lem}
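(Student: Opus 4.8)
The plan is to mimic the error analysis used in the uniqueness lemma, but now comparing the true solution $(u,T)$ to the iterates. First I would write the weak form \eqref{boussWF} for $(u,T)$ against test functions $(v,w) \in V \times D$ and subtract the Picard system \eqref{PicardWF} for $(u^{k+1},T^{k+1})$. Denoting $e^{k} = u - u^k$ and $e_T^{k} = T - T^k$, the nonlinear term $b(u,u,v) - b(u^k,u^{k+1},v)$ must be split so that the lagged argument appears; the natural splitting is $b(u,u,v) - b(u^k,u^{k+1},v) = b(u^k, u - u^{k+1}, v) + b(e^k, u, v)$, i.e. $b(u^k, e^{k+1}, v) + b(e^k, u, v)$ after adding and subtracting $b(u^k, u, v)$. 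Similarly $\hat b(u,T,w) - \hat b(u^k,T^{k+1},w) = \hat b(u^k, e_T^{k+1}, w) + \hat b(e^k, T, w)$. This gives the error system
\begin{equation*}
\begin{cases}
b(u^k, e^{k+1}, v) + b(e^k, u, v) + \nu(\nabla e^{k+1},\nabla v) = R_i((0~e_T^{k+1})^T, v),\\
\hat b(u^k, e_T^{k+1}, w) + \hat b(e^k, T, w) + \kappa(\nabla e_T^{k+1},\nabla w) = 0.
\end{cases}
\end{equation*}

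Next I would test the second equation with $w = e_T^{k+1} \in D$. Skew-symmetry kills $\hat b(u^k, e_T^{k+1}, e_T^{k+1})$, leaving $\kappa\|\nabla e_T^{k+1}\|^2 = -\hat b(e^k, T, e_T^{k+1}) \le C_s \|\nabla e^k\|\,\|\nabla T\|\,\|\nabla e_T^{k+1}\|$ by \eqref{bbound}. Using $\|\nabla T\| \le M_2$ from Lemma \ref{BoussStability} and dividing by $\kappa\|\nabla e_T^{k+1}\|$ yields $\|\nabla e_T^{k+1}\| \le C_s \kappa^{-1} M_2 \|\nabla e^k\| = \alpha_2\|\nabla e^k\|$, which is the first claimed bound. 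Then I would test the first equation with $v = e^{k+1} \in V$; skew-symmetry removes $b(u^k, e^{k+1}, e^{k+1})$, giving $\nu\|\nabla e^{k+1}\|^2 = -b(e^k, u, e^{k+1}) + R_i((0~e_T^{k+1})^T, e^{k+1})$. Bounding the first term by $C_s\|\nabla e^k\|\|\nabla u\|\|\nabla e^{k+1}\| \le C_s M_1\|\nabla e^k\|\|\nabla e^{k+1}\|$ via \eqref{uBound}, and the second by Cauchy--Schwarz and Poincar\'e as $R_i C_p^2 \|\nabla e_T^{k+1}\|\|\nabla e^{k+1}\|$, then dividing by $\nu\|\nabla e^{k+1}\|$ gives $\|\nabla e^{k+1}\| \le \alpha_1\|\nabla e^k\| + R_i C_p^2 \nu^{-1}\|\nabla e_T^{k+1}\|$. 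Substituting the already-established bound $\|\nabla e_T^{k+1}\| \le \alpha_2\|\nabla e^k\|$ collapses this to $\|\nabla e^{k+1}\| \le (\alpha_1 + R_i C_p^2 \nu^{-1}\alpha_2)\|\nabla e^k\|$.

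The remaining gap is cosmetic: the statement claims contraction factor $\alpha_1 + \alpha_2$, not $\alpha_1 + R_i C_p^2 \nu^{-1}\alpha_2$, so I would invoke the hypothesis $C_p^2 \nu^{-1} R_i < 1$ to absorb the $R_i C_p^2 \nu^{-1}$ factor and conclude $\|\nabla e^{k+1}\| \le (\alpha_1 + \alpha_2)\|\nabla e^k\|$; since $\alpha_1 + \alpha_2 < 1$ is also assumed, this is a genuine contraction and linear convergence follows by iterating. The one point requiring a little care --- and the most likely source of a hidden constant mismatch --- is the bound on $R_i((0~e_T^{k+1})^T, e^{k+1})$: one must be careful that the factor of $C_p^2$ (rather than $C_p$) is correct, which comes from applying Poincar\'e to \emph{both} $e_T^{k+1}$ and $e^{k+1}$, exactly as in the proof of Lemma \ref{BoussStability}. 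I expect no genuine obstacle beyond bookkeeping, since every nonlinear interaction either vanishes by skew-symmetry or is controlled by \eqref{bbound} together with the uniform a priori bounds $M_1, M_2$.
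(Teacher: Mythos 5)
Your proposal is correct and follows essentially the same route as the paper: subtract the weak formulations, test with $(e^{k+1},e_T^{k+1})$, kill two trilinear terms by skew-symmetry, and bound the rest with \eqref{bbound}, Poincar\'e, and the a priori bounds $M_1,M_2$ to get exactly the two intermediate estimates $\|\nabla e_T^{k+1}\|\le\alpha_2\|\nabla e^k\|$ and $\|\nabla e^{k+1}\|\le\alpha_1\|\nabla e^k\|+C_p^2\nu^{-1}R_i\|\nabla e_T^{k+1}\|$. The only (immaterial) difference is the final combination: you substitute the temperature bound into the velocity bound and then use $C_p^2\nu^{-1}R_i<1$ to absorb the factor, while the paper adds the two inequalities and drops the nonnegative term $(1-C_p^2\nu^{-1}R_i)\|\nabla e_T^{k+1}\|$; both yield the stated rate $\alpha_1+\alpha_2$.
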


\begin{rem}
Note that the sufficient conditions for convergence of Picard are the same as the sufficient conditions for the uniqueness of solutions to the non-isothermal flow system.
\end{rem}

\begin{proof}
We subtract \eqref{boussWF} from \eqref{PicardWF} and choose $v=\ekone$ and $q=\etkone$. Using skew-symmetry , vanishes two nonlinear terms and leaves the equality
\begin{equation*}
\begin{cases}
b(\ek, u,\ekone) +\nu\|\nabla \ekone\|^2 &= R_i  ((0 \hspace{3pt} \etkone)^T,\ekone), \\
\hat{b}(\ek, T,\etkone)+\kappa \|\nabla \etkone\|^2&=0 .
\end{cases}
\end{equation*}
Next we use Cauchy-Schwarz, Poincar\'e, and \eqref{bbound} to upper bound these equations as
\begin{equation*}
\begin{cases}
\nu\|\nabla \ekone\|^2 &\leq C_p^2 R_i \|\nabla\etkone\| \|\nabla\ekone\| + C_s\|\nabla\ek\| \|\nabla u\| \|\nabla\ekone\|,  \\
\kappa \|\nabla \etkone\|^2&\leq C_s\|\nabla\ek\| \|\nabla T\| \|\nabla\etkone\|.
\end{cases}
\end{equation*}
Then we reduce and apply Lemma \ref{BoussStability} to get
\begin{equation*}
\begin{cases}
\|\nabla \ekone\|&\leq C_p^2 \nu^{-1} R_i \|\nabla\etkone\| + \alpha_1\|\nabla\ek\| ,  \\
\|\nabla \etkone\|&\leq \alpha_2 \|\nabla\ek\| .
\end{cases}
\end{equation*}
this gives the bound
\begin{equation*}
 \|\nabla \etkone \| \leq  \alpha_2 \|\nabla\ek\|,
\end{equation*}
Adding the equations and reducing gives
\begin{align*}
\|\nabla \ekone\|+ (1-C_p^2 R_i\nu^{-1})\|\nabla \etkone\|^2 &\leq C_s(\nu^{-1} \|\nabla u\| +\kappa^{-1} \|\nabla T\|)  \|\nabla\ek\|\\
&\leq C_s(\nu^{-1} M_1+\kappa^{-1} M_2)\|\nabla\ek\|\\
& \leq (\alpha_1+\alpha_2) \|\nabla\ek\|.
\end{align*}
Finally, using the assumptions on the data finishes the proof.
\end{proof}

\subsection{Newton iteration}
The weak formulation for the Newton iteration for the non-isothermal flow system takes the form: Find $(\ukone,\tkone)\in V\times D$ satisfying $\forall (v,w)\in V\times D$,
\begin{equation}\label{NewtonWF}
\begin{cases}
b(\ukone, \uk,v) + b(\uk,\ukone,v) + \nu(\nabla \ukone,\nabla v) &= (f,v) +R_i ((0 ~\tkone)^T,v)\\
&~~~+b(\uk, \uk,v),\\
\hat{b}(\ukone, T^k,w) + \hat{b}(\uk, \tkone,w)+\kappa (\nabla \tkone,\nabla w)&=(g,w)+\hat{b}(\uk, T^k,w).
\end{cases}
\end{equation}  

\begin{lem}\label{Lemma:N1}
Consider the Newton iteration \eqref{NewtonWF} with data and solutions satisfying 
\begin{align*}
&C_s\nu^{-1}\| \nabla (\uk-u)\| +\alpha_1 + C_s\nu^{-1}\|\nabla (T^k-T)\|+ C_s \nu^{-1}M_2+ C_p^2R_i<1,\\
& \frac{C_s\kappa^{-1} }{4}\|\nabla (T^k-T)\|+\frac{\alpha_2}{4}+ \frac{C_p^2\kappa^{-1} }{4}R_i)<1.
\end{align*}
 Then there exists a unique solution to the iteration. 
\end{lem}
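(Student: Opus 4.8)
The plan is to establish existence and uniqueness for the linear system \eqref{NewtonWF} via the Lax--Milgram theorem, exactly as was done for the Picard iteration, but now the bilinear form picks up the extra Newton-linearization terms $b(\hat u,\uk,v)$ and $\hat b(\hat u, T^k,w)$ which are \emph{not} skew-symmetric and hence must be controlled by the data/accuracy hypotheses. First I would set $Y=V\times D$ and define, at step $k+1$,
\begin{align*}
A((\hat u,\hat T),(v,w)) &:= b(\hat u,\uk,v)+b(\uk,\hat u,v)+\nu(\nabla\hat u,\nabla v)\\
&\quad + \hat b(\hat u,T^k,w)+\hat b(\uk,\hat T,w)+\kappa(\nabla\hat T,\nabla w)-R_i((0~\hat T)^T,v),\\
F((v,w)) &:= (f,v)+(g,w)+b(\uk,\uk,v)+\hat b(\uk,T^k,w),
\end{align*}
so that \eqref{NewtonWF} reads $A((\hat u,\hat T),(v,w))=F((v,w))$. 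Continuity of $A$ on $Y$ and of $F$ on $Y$ follows routinely from \eqref{bbound}, Cauchy--Schwarz, Poincar\'e, and the a priori bounds of Lemmas \ref{BoussStability} and \ref{Lemma:PicardBound} (which control $\|\nabla\uk\|,\|\nabla T^k\|$, or at any rate are assumed bounded here), so the crux is coercivity.

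For coercivity I would test with $(v,w)=(\hat u,\hat T)$. The terms $b(\uk,\hat u,\hat u)$ and $\hat b(\uk,\hat T,\hat T)$ vanish by skew-symmetry, leaving
\[
A((\hat u,\hat T),(\hat u,\hat T)) = \nu\|\nabla\hat u\|^2+\kappa\|\nabla\hat T\|^2 + b(\hat u,\uk,\hat u)+\hat b(\hat u,T^k,\hat T)-R_i((0~\hat T)^T,\hat u).
\]
The sign-indefinite terms are then bounded: write $\uk=(\uk-u)+u$ and $T^k=(T^k-T)+T$ so that, using \eqref{bbound} and \eqref{uBound}--\eqref{TBound},
\[
|b(\hat u,\uk,\hat u)|\le C_s(\|\nabla(\uk-u)\|+M_1)\|\nabla\hat u\|^2,
\]
and similarly $|\hat b(\hat u,T^k,\hat T)|\le C_s(\|\nabla(T^k-T)\|+M_2)\|\nabla\hat u\|\,\|\nabla\hat T\|$, while the buoyancy coupling is bounded by Cauchy--Schwarz and Poincar\'e and then split with Young's inequality as $R_i C_p^2\|\nabla\hat T\|\|\nabla\hat u\|\le \frac{R_i C_p^2}{2}(\|\nabla\hat u\|^2+\|\nabla\hat T\|^2)$; the mixed term $C_s(\cdots)\|\nabla\hat u\|\|\nabla\hat T\|$ is likewise split by Young's inequality, and recalling $\alpha_1=C_s\nu^{-1}M_1$, $\alpha_2=C_s\kappa^{-1}M_2$ one recovers, after dividing the velocity block by $\nu$ and the temperature block by $\kappa$, precisely the two displayed smallness conditions as the requirement that the coefficients of $\|\nabla\hat u\|^2$ and of $\|\nabla\hat T\|^2$ remain strictly positive. (The exact placement of the $\tfrac14$ factors in the second hypothesis comes from how the Young's-inequality weights are chosen on the two $\|\nabla\hat u\|\|\nabla\hat T\|$ cross terms; I would pick the weights to match.) Coercivity on $Y$ then follows, and Lax--Milgram delivers a unique solution.

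The main obstacle is bookkeeping rather than conceptual: unlike the Picard case, the Newton bilinear form has \emph{two} non-skew-symmetric trilinear contributions plus the buoyancy term, all of which feed into both diagonal blocks after Young's inequality, so one has to be careful about which cross term is absorbed into the velocity estimate versus the temperature estimate in order to arrive at the stated hypotheses (and to see why the second inequality carries the factor $\tfrac14$). A secondary subtlety worth a sentence: in the finite-dimensional (Galerkin) setting the Lax--Milgram conclusion is immediate, whereas over the full spaces $V\times D$ one invokes it on the closed subspace $V$ with the inf--sup property recovering the pressure; since the paper has already adopted this convention, I would simply remark that the argument is carried out in the same framework as the earlier lemmas.
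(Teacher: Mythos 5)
Your proposal is correct and follows essentially the same route as the paper: Lax--Milgram on $Y=V\times D$ with the identical forms $A$ and $F$, skew-symmetry eliminating $b(\uk,\hat u,\hat u)$ and $\hat b(\uk,\hat T,\hat T)$, the splittings $\uk=(\uk-u)+u$ and $T^k=(T^k-T)+T$ combined with \eqref{bbound} and the a priori bounds $M_1,M_2$, and Young's inequality distributing the cross terms so that the two displayed smallness conditions are exactly the positivity of the coefficients of $\|\nabla\hat u\|^2$ and $\|\nabla\hat T\|^2$. Your observation about the origin of the $\tfrac14$ factors matches the paper's choice of Young weights, so there is nothing substantive to add.
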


\begin{proof}

We proceed just like for Picard and will use Lax-Milgram. Let $Y= V\times D$ and at iteration $k+1$ define $A: Y\times Y\rightarrow \R$ and $F: Y\rightarrow \R$ by
\begin{align*}
A( (\hat{u},\hat{T} ), (v,w)):&=b(\hat{u},  \uk,v) + b(\uk, \hat{u},v) + \nu(\nabla \hat{u},\nabla v)-R_i ((0 ~\hat{T})^T,v) 
+\hat{b}(\hat{u},  T^k,w) + \hat{b}(\uk, \hat{T},w),\\
&~~~+\kappa (\nabla T,\nabla w)\\
F((v,w))&= (f,v) + b(\uk,\uk,v) + (g,w) + \hat{b}(\uk, T^k,w).
\end{align*}
Then the Newton iteration is given by $A( (\hat{u},\hat{T}), (v,w)) = F((v,w))$. Continuity follows by applying \eqref{bbound}, Poincar\'e, Cauchy-Schwarz, and Young's inequality
\begin{align*}
A( (\hat{u},\hat{T}), (v,w) )&=b(\hat{u},  \uk,v) + b(\uk, \hat{u},v) + \nu(\nabla \hat{u},\nabla v)-R_i ((0 ~\hat{T})^T,v) \\
&
~~~+\hat{b}(\hat{u},  T^k,w) + \hat{b}(\uk, \hat{T},w)+\kappa (\nabla \hat{T},\nabla w)\\
&\leq (2C_s\|\nabla \uk\| + \nu)\|\nabla \hat{u}\| \|\nabla v\| +C_p^2 R_i \|\nabla \hat{T} \| \|\nabla v\| \\
&~~~+C_s\| \nabla \hat{u}\| \|\nabla T^k\| \|\nabla w\| + (C_s \|\nabla \uk\|+\kappa) \|\nabla \hat{T}\| \|\nabla w\|\\
&\leq \sigma_{max}(\|\nabla \hat{u}\| +\|\nabla \hat{T}\|)(\|\nabla v\| +\|\nabla w\|)\\
&\leq \sigma_{max} \| (\hat{u},\hat{T})\|_Y \|(v,w)\|_Y,
\end{align*}
where $\sigma_{max}=\max\{(2C_s \|\nabla \uk\| + \nu), C_p^2R_i, C_s \|\nabla T^k\|, C_s \|\nabla \uk\|  +\kappa \}$. The continuity of $F$ is shown using the dual norms and \eqref{bbound} as
\begin{align*}
F((v,w))&= (f,v) + b(\uk,\uk,v) + (g,w) + \hat{b}(\uk, T^k,w)\\
&\leq \|f\|_{V^\ast}\|v\| + C_s\|\nabla\uk\|^2\|\nabla v\| +\|g\|_{D^\ast}\|\nabla w\| + C_s\|\nabla\uk\|\|\nabla T^k\|\|\nabla w\|\\
&\leq (\|f\|_{V^\ast}+ C_s\|\nabla\uk\|^2+\|g\|_{D^\ast}+ C_s\|\nabla\uk\|\|\nabla T^k\|)\|(v, w)\|_{Y}.
\end{align*}

Next we show $A$ is coercive, and this will require a restriction on the problem data. Here we use Cauchy-Schwarz and skew symmetry to lower bound $A$ as
\begin{align*}
A( (\hat{u},T), (\hat{u},\hat{T}))&=b(\hat{u},  \uk,\hat{u}) + b(\uk, \hat{u},\hat{u}) + \nu\|\nabla \hat{u}\|^2
+\hat{b}(\hat{u},  T^k,T) +b (\uk, \hat{T}, \hat{T})+\kappa \|\nabla \hat{T}\|^2 \\
&~~~- R_i ((0 ~\hat{T})^T,\hat{u}) \\
&\geq b(\hat{u},  \uk,\hat{u}) + \nu\|\nabla \hat{u}\|^2 +\hat{b}(\hat{u},  T^k,\hat{T}) +\kappa \|\nabla \hat{T}\|^2- C_p^2R_i\|\nabla \hat{T}\|\|\nabla \hat{u}\|\\
&\geq b(\hat{u,} \uk-u,\hat{u}) + b(\hat{u},  u,\hat{u})+ \nu\|\nabla \hat{u}\|^2 +\hat{b}(\hat{u}, T^k- T,\hat{T}) \\
&~~~+ \hat{b}(\hat{u}, T,\hat{T})  +\kappa \|\nabla \hat{T}\|^2- C_p^2R_i\|\nabla \hat{T}\|\|\nabla \hat{u}\|.
\end{align*}

Using the bound \eqref{bbound}
\begin{align*}
b(\hat{u},  \uk-u,\hat{u})&\geq -C_s\|\nabla \hat{u}\|^2\| \nabla (\uk-u)\|, \\
 \hat{b}(\hat{u},  u,\hat{u}) &\geq -C_s\|\nabla \hat{u}\|^2 \|\nabla u\| \geq -C_s M_1\|\nabla \hat{u}\|^2, \\
b (\hat{u},  T^k- T,\hat{T}) &\geq -C_s\|\nabla u\|\nabla (T^k-T)\| \|\nabla \hat{T}\|, \\
 \hat{b}(\hat{u},  T,\hat{T})&\geq -C_s\|\nabla u\|\|\nabla T\|\|\nabla \hat{T}\| \geq -C_s M_2\|\nabla \hat{u}\| \|\nabla \hat{T}\|,
\end{align*}
Then we bound $A$ using the same bounds as above to get
\begin{align*}
A( (\hat{u},T), (\hat{u},T))
&\geq (\nu-C_s\| \nabla (\uk-u)\| -C_s M_1)\|\nabla \hat{u}\|^2 \\
&~~~-(C_s\|\nabla (T^k-T)\|+C_sM_2+ C_p^2 R_i) \|\nabla u\|\|\nabla \hat{T}\| +\kappa \|\nabla \hat{T}\|^2\\
&\geq (\nu-C_s\| \nabla (\uk-u)\| -C_s M_1)\|\nabla \hat{u}\|^2 \\
&~~~-(C_s\|\nabla (T^k-T)\|+C_sM_2+ C_p^2 R_i) \|\nabla \hat{u}\|^2\\
&~~~ -(\frac{C_s}{4}\|\nabla (T^k-T)\|+\frac{C_s}{4}M_2+ \frac{C_p^2}{4}R_i) \|\nabla \hat{T}\|^2 +\kappa \|\nabla \hat{T}\|^2\\
&\geq (\nu-(C_s\| \nabla (\uk-u)\| +C_sM_1+C_s\|\nabla (T^k-T)\|+C_sM_2+ C_p^2R_i) )\|\nabla \hat{u}\|^2 \\
&~~~+(\kappa -(\frac{C_s}{4}\|\nabla (T^k-T)\|+\frac{C_s}{4}M_2+ \frac{C_p^2}{4}R_i))\|\nabla \hat{T}\|^2\\
&\geq\sigma_{min} \|(\hat{u},\hat{T})\|_Y^2,
\end{align*}
where 
\begin{multline*}
\sigma_{min}=\min\{(\nu-(C_s\| \nabla (\uk-u)\| +C_sM_1 + C_s\|\nabla (T^k-T)\|+ C_s M_2+ C_p^2R_i) ),\\
(\kappa -(\frac{C_s}{4}\|\nabla (T^k-T)\|+\frac{C_s}{4}M_2+ \frac{C_p^2}{4}R_i))\}.
\end{multline*}
Hence A is coercive. Thus by Lax-Milgram, the Newton iteration \eqref{NewtonWF} is well-posed.


\end{proof}

With this well-posedness result, the solution operator of \eqref{NewtonWF} defined by $g_N:(V,D)\rightarrow (V,D)$, $g_N(u^k,T^k)=(u^{k+1},T^{k+1})$ is well-defined provided the data restrictions of Lemma \ref{Lemma:N1} are satisfied. We note this is only a sufficient condition, and the Newton iteration for the non-isothermal flow system \eqref{NewtonWF} is believed to be well-posed on a much less restrictive set of parameters. \\

\begin{lem}
Assume for any $k$ that 
\begin{equation*}
\gamma_k:=\alpha_1+\alpha_2+\nu^{-1}C_s\|\nabla (u-u^k)\|+\kappa^{-1}C_s \|\nabla (T-T^k)\|<1,
\end{equation*}
and
\begin{equation*}
 C_p^2 \nu^{-1}R_i<1.
 \end{equation*}
Then the Newton iteration \eqref{NewtonIter} converges quadratically
for any $u^k$ and $T^k$ satisfying
$$C_s(1-\max\{\gamma_k, C_p^2 R_i \nu^{-1}\})^{-1}(\kappa^{-1}+\nu^{-1})(\|\nabla (u-u^k)\|+\|\nabla (T-T^k)\|)<1.$$
\end{lem}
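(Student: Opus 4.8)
The plan is to run the standard Newton error analysis, using that the only nonlinearities in \eqref{boussWF} are the quadratic trilinear forms $b$ and $\hat b$, so that the Newton linearization commits an error that is \emph{exactly} quadratic in the previous iterates' errors. Write $\ek := u-\uk$, $\etk := T-T^k$, $\ekone := u-\ukone$, $\etkone := T-\tkone$, with the same convention as in the proof of Lemma \ref{Lemma:PicardConv}. First I would subtract \eqref{NewtonWF} from \eqref{boussWF} and expand each trilinear term via $\uk = u-\ek$, $\ukone = u-\ekone$, $T^k = T-\etk$, $\tkone = T-\etkone$; the constant pieces $b(u,u,v)$, $\hat b(u,T,w)$ cancel, the pieces linear in the errors reassemble into the Newton Jacobian at $(\uk,T^k)$ applied to $(\ekone,\etkone)$, and only the quadratic linearization remainders survive on the right. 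This should yield, for all $(v,w)\in V\times D$,
\begin{align*}
b(\ekone,\uk,v)+b(\uk,\ekone,v)+\nu(\nabla\ekone,\nabla v)-R_i\big((0~\etkone)^T,v\big) &= -b(\ek,\ek,v),\\
\hat b(\ekone,T^k,w)+\hat b(\uk,\etkone,w)+\kappa(\nabla\etkone,\nabla w) &= -\hat b(\ek,\etk,w).
\end{align*}

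Next I would test the first equation with $v=\ekone$ and the second with $w=\etkone$. Skew-symmetry annihilates $b(\uk,\ekone,\ekone)$ and $\hat b(\uk,\etkone,\etkone)$, leaving
\begin{align*}
\nu\|\nabla\ekone\|^2 &= -b(\ekone,\uk,\ekone)+R_i\big((0~\etkone)^T,\ekone\big)-b(\ek,\ek,\ekone),\\
\kappa\|\nabla\etkone\|^2 &= -\hat b(\ekone,T^k,\etkone)-\hat b(\ek,\etk,\etkone).
\end{align*}
Bounding each term with \eqref{bbound}, Cauchy--Schwarz and Poincar\'e, using $\|\nabla\uk\|\le\|\nabla u\|+\|\nabla\ek\|\le M_1+\|\nabla\ek\|$ and $\|\nabla T^k\|\le M_2+\|\nabla\etk\|$ from Lemma \ref{BoussStability}, then (the degenerate cases where $\ekone$ or $\etkone$ vanishes being immediate) dividing by $\|\nabla\ekone\|$, $\|\nabla\etkone\|$ and then by $\nu$, $\kappa$, and recalling $\alpha_1=C_s\nu^{-1}M_1$, $\alpha_2=C_s\kappa^{-1}M_2$, I obtain the coupled pair
\begin{align*}
\|\nabla\ekone\| &\le \big(\alpha_1+\nu^{-1}C_s\|\nabla\ek\|\big)\|\nabla\ekone\|+C_p^2\nu^{-1}R_i\|\nabla\etkone\|+\nu^{-1}C_s\|\nabla\ek\|^2,\\
\|\nabla\etkone\| &\le \big(\alpha_2+\kappa^{-1}C_s\|\nabla\etk\|\big)\|\nabla\ekone\|+\kappa^{-1}C_s\|\nabla\ek\|\,\|\nabla\etk\|.
\end{align*}

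Finally I would add these two inequalities, exactly as in the proof of Lemma \ref{Lemma:PicardConv}: the coefficient of $\|\nabla\ekone\|$ on the right collapses to precisely $\gamma_k$, and moving the $\|\nabla\etkone\|$ terms to the left gives $(1-\gamma_k)\|\nabla\ekone\|+(1-C_p^2\nu^{-1}R_i)\|\nabla\etkone\|\le C_s\big(\nu^{-1}\|\nabla\ek\|^2+\kappa^{-1}\|\nabla\ek\|\,\|\nabla\etk\|\big)$. Under the hypotheses $\gamma_k<1$ and $C_p^2\nu^{-1}R_i<1$ both left-hand coefficients are positive, so bounding them below by $1-\max\{\gamma_k,C_p^2\nu^{-1}R_i\}$ and using $\nu^{-1}\|\nabla\ek\|^2+\kappa^{-1}\|\nabla\ek\|\,\|\nabla\etk\|\le(\nu^{-1}+\kappa^{-1})(\|\nabla\ek\|+\|\nabla\etk\|)^2$ produces the one-step quadratic estimate
\begin{equation*}
\|\nabla\ekone\|+\|\nabla\etkone\|\le\frac{C_s(\nu^{-1}+\kappa^{-1})}{1-\max\{\gamma_k,C_p^2\nu^{-1}R_i\}}\big(\|\nabla\ek\|+\|\nabla\etk\|\big)^2 .
\end{equation*}
The hypothesis on $\uk,T^k$ is exactly the statement that the constant multiplying $\|\nabla\ek\|+\|\nabla\etk\|$ here is below $1$, so the total error strictly (hence quadratically) decreases; since a smaller error makes $\gamma_{k+1}$ and thus that constant no larger, a short induction propagates the estimate to all later iterates and yields the claimed quadratic convergence.

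I expect the main obstacle to be the bookkeeping in the first step: every trilinear term must be expanded carefully enough that all contributions linear in $(\ek,\etk)$ cancel against the Newton Jacobian, leaving only the quadratic remainders $b(\ek,\ek,\cdot)$ and $\hat b(\ek,\etk,\cdot)$ --- this cancellation is precisely what upgrades the rate from the linear one of Picard (Lemma \ref{Lemma:PicardConv}) to quadratic, so it must be carried out exactly rather than estimated crudely. A secondary subtlety is that the buoyancy term couples $\|\nabla\etkone\|$ into the momentum estimate, so the two error inequalities must be handled as a genuine coupled system; the condition $C_p^2\nu^{-1}R_i<1$ is exactly what keeps that system positive (and is why it appears in the convergence constant), paralleling its role in the uniqueness and Picard-convergence results.
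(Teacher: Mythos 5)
Your proposal is correct and follows essentially the same route as the paper: subtract the Newton iteration from the weak Boussinesq system so only the quadratic remainders $b(\ek,\ek,\cdot)$ and $\hat b(\ek,\etk,\cdot)$ survive, test with $(\ekone,\etkone)$, use skew-symmetry and \eqref{bbound}, add the two resulting inequalities so the coefficient of $\|\nabla \ekone\|$ becomes $\gamma_k$, and lower bound by $1-\max\{\gamma_k,C_p^2R_i\nu^{-1}\}$ to reach the same one-step quadratic estimate. The only (cosmetic) difference is that you bound $\|\nabla \uk\|\le M_1+\|\nabla\ek\|$ and $\|\nabla T^k\|\le M_2+\|\nabla\etk\|$ after applying \eqref{bbound}, whereas the paper splits $\uk=u-\ek$, $T^k=T-\etk$ inside the trilinear forms before estimating; the resulting bounds are identical.
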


\begin{proof}
Let $\etkone=T-\tkone$ and $\ek=u-u^k$. We subtract \eqref{boussWF} from \eqref{NewtonWF}, add and subtract $b(u,\ekone,v)$ in the first equation and $\hat{b}(\ukone, T,v)$ into the second, and set $v=\ekone$ and $w=\etkone$ to get
\begin{equation*}
\begin{cases}
b(\ekone , \ek,\ekone) &+ b(\ekone,  u,\ekone)+ b(\ek, \ekone,\ekone) +b(u, \ekone,\ekone) +\nu\| \nabla\ekone\|^2 \\
&=R_i ((0~ \etkone)^T,\ekone)+b(\ek,\ek,\ekone), \\
\hat{b}(\ekone,  \etk,\etkone)&+\hat{b}(\ekone,  T,\etkone)+ \hat{b}(\ek,\etkone,\etkone)+\hat{b}(u,\etkone,\etkone)+\kappa \|\nabla \etkone\|^2 \\
&=\hat{b}(\ek, \etk,\etkone).
\end{cases}
\end{equation*}
Next using skew symmetry, 4 terms in the system above vanish, leaving us with 
\begin{equation*}
\begin{cases}
\nu\| \nabla\ekone\|^2 &=R_i ((0 ~\etkone)^T,\ekone)+b(\ek, \ek,\ekone)-b(\ekone,  \ek,\ekone) - b(\ekone , u,\ekone) , \\
\kappa \|\nabla \etkone\|^2 &=\hat{b}(\ek, \etk,\etkone)-\hat{b}(\ekone,  \etk,\etkone)-\hat{b}(\ekone,  T,\etkone).
\end{cases}
\end{equation*}
We use the bound \eqref{bbound}, Cauchy-Schwarz, Poincar\'e, and the stability bounds from Lemma \ref{BoussStability} to upper bound the right hand side terms as
\begin{equation*}
\begin{cases}
 \nu\| \nabla \ekone\|^2 &\leq C_p^2 R_i \| \nabla\etkone\| \|\nabla\ekone\| +C_s \|\nabla\ek\|^2 \|\nabla\ekone\| + C_s \|\nabla\ekone\|^2 \|\nabla \ek\|  + \nu \alpha_1\| \nabla \ekone\|^2,\\
\kappa \|\nabla \etkone\|^2 &\leq C_s\|\nabla\ek\| \|\nabla \etk\| \|\nabla\etkone\| +C_s \|\nabla \ekone\| \| \nabla \etk\| \|\nabla\etkone\|+\kappa \alpha_2 \|\nabla\ekone\| \|\nabla\etkone\|.
\end{cases}
\end{equation*}


This reduces to
\begin{equation}\label{eq:N1}
\begin{cases}
( 1-\alpha_1-\nu^{-1}C_s\|\nabla \ek\| )\| \nabla \ekone\| &\leq C_p^2\nu^{-1}R_i \| \nabla\etkone\|+C_s\nu^{-1}  \|\nabla\ek\|^2  ,\\
~~~~~~~~~~~~~~~~~~~~~~~~~~~~~~~\|\nabla \etkone\| &\leq \kappa^{-1}C_s\|\nabla\ek\| \|\nabla \etk\|+\kappa^{-1}C_s\|\nabla \ekone\| \|\nabla \etk\|+ \alpha_2 \|\nabla\ekone\|.
\end{cases}
\end{equation}

Next we add the equations and reduce to get
\begin{align*}
( 1-\alpha_1-\alpha_2-\nu^{-1}C_s\|\nabla \ek\|-\kappa^{-1}C_s \|\nabla \etk\| )\| \nabla \ekone\| &+ (1-C_p^2\nu^{-1}R_i )\|\nabla \etkone\| \\&\leq C_s\nu^{-1}  \|\nabla\ek\|^2 + \kappa^{-1}C_s\|\nabla\ek\| \|\nabla \etk\|,\\
&\leq C_s(\nu^{-1}  +\kappa^{-1})(\|\nabla\ek\| + \|\nabla \etk\|)^2.
\end{align*}

The left hand side is lower bounded as follows,
\begin{multline*}
( 1-\max\{\gamma_k, C_p^2 R_i \nu^{-1}\})(\| \nabla \ekone\| + \|\nabla \etkone\|)\leq ( 1-\alpha_1-\alpha_2-\nu^{-1}C_s\|\nabla \ek\|-\kappa^{-1}C_s \|\nabla \etk\| )\| \nabla \ekone\| \\+ (1-C_p^2\nu^{-1}R_i )\|\nabla \etkone\|.
\end{multline*}
This reduces to,
\begin{equation*}
\| \nabla \ekone\| + \|\nabla \etkone\| \leq C_s( 1-\max\{\gamma_k, C_p^2 R_i \nu^{-1}\})^{-1}( \kappa^{-1}+\nu^{-1})(\|\nabla\ek\| + \| \nabla \etk\|)^2,
\end{equation*}
which completes the proof.

\end{proof}

\section{Nonlinear Preconditioner Analysis}
In this section, we consider (AA-)Picard-Newton for the non-isothermal flow system, then analyze its stability and convergence properties. We show that preconditioning stabilizes Newton and reduces the sufficient conditions on small data assumptions and closeness assumption of the initial guess. We restrict our analysis to the case of no AA, however we note that with extra technicalities, similar proofs could be adapted.\\

Using the solution operators for Picard and Newton defined in the previous section, the Picard based nonlinear preconditioning of Newton, Picard-Newton, is defined as $g_{PN}:=g_N\circ g_{P}$ so that 
$$g_{PN}(u^k,T^k)=g_N( g_{P}(u^k,p^k,T^k) )= (u^{k+1}, T^{k+1}).$$
 Thus, Picard-Newton consists of two steps. The first step is the Picard step given by $g_{P}(u^k,T^k)=(\utkone,\ttkone)$ which is then followed by the Newton step given by $g_N(\utkone,\ttkone)=(\ukone,\tkone)$. \\

\begin{alg}{Picard-Newton}
\begin{algorithmic}
\State Given $(u^0,p^0,T^0)\in (X,Q,D)$.
\For{$k=0,1,2,...$} 
\State Step 1: Solve for $(\utkone,\ttkone)\in V\times D$ satisfying
\begin{equation}\label{Picard}
\begin{cases}
b(\uk, \utkone,v)  +\nu(\nabla \utkone,\nabla v) &= (f,v) +R_i ((0 ~\ttkone)^T,v),\\
\hat{b}(\uk, \ttkone,w)+\kappa (\nabla \ttkone,\nabla w)&=(g,w).
\end{cases}
\end{equation}
\State Step 2: Solve for $(\ukone,\tkone)\in  V\times D$ satisfying
\begin{equation}\label{Newton}
\begin{cases}
b(\ukone, \utkone,v) + b(\utkone, \ukone,v) + \nu(\nabla \ukone,\nabla v) &= (f,v) +R_i ((0 ~\tkone)^T,v)\\
&~~~+b(\utkone, \utkone,v),\\
\hat{b}(\ukone, \ttkone,w) + \hat{b}(\utkone,\tkone,w)+\kappa (\nabla \tkone,\nabla w)&=(g,w)+\hat{b}(\utkone, \ttkone,w).
\end{cases}
\end{equation}  \EndFor
\end{algorithmic}
\end{alg}

Picard is known to be unconditionally stable, while Newton is not (and is consequently divergent for many $Ra$). Therefore it is intuitive that the stability of Picard will have a positive effect on the stability of Picard-Newton. Hence, we begin by analyzing the stability of Picard-Newton. 
\begin{theorem}
If $\alpha_1+\alpha_2, ~\nu^{-1}R_i C_p^2<1$, Picard-Newton \eqref{Picard}-\eqref{Newton} is stable with
\begin{equation*}
(1-\alpha_1-\alpha_2)\|\nabla \ukone\| +  (1-\nu^{-1}R_i C_p)\|\nabla \tkone\| \leq M_2+2M_1.
\end{equation*}
\end{theorem}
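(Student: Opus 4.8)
The plan is to bound the Newton step (Step 2) output in terms of the Picard step (Step 1) output, and then use the unconditional stability of Picard to bound the latter in terms of the problem data. First I would record the Picard-step a priori bounds: arguing exactly as in Lemma \ref{BoussStability} (take $v=\utkone$, $w=\ttkone$ in \eqref{Picard}, use skew-symmetry to kill the convective terms, then Cauchy--Schwarz and Poincar\'e), one gets $\|\nabla \ttkone\|\le M_2$ and $\|\nabla \utkone\|\le M_1$, with no smallness condition needed. These are the inputs to Step 2.

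Next I would estimate the Newton step. In \eqref{Newton}, test with $v=\ukone$ and $w=\tkone$. The term $b(\utkone,\ukone,\ukone)$ vanishes by skew-symmetry, and likewise $\hat b(\utkone,\tkone,\tkone)=0$. What survives on the left in the first equation is $b(\ukone,\utkone,\ukone)+\nu\|\nabla\ukone\|^2$; on the right we have $(f,\ukone)+R_i((0~\tkone)^T,\ukone)+b(\utkone,\utkone,\ukone)$. Bounding $|b(\ukone,\utkone,\ukone)|\le C_s\|\nabla\utkone\|\,\|\nabla\ukone\|^2\le C_s M_1\|\nabla\ukone\|^2=\nu\alpha_1\|\nabla\ukone\|^2$, moving it to the left, and bounding the right-hand side via the dual norms, Poincar\'e, and $|b(\utkone,\utkone,\ukone)|\le C_s M_1^2\|\nabla\ukone\|$, dividing through by $\|\nabla\ukone\|$ yields
\begin{equation*}
(1-\alpha_1)\|\nabla\ukone\|\le \nu^{-1}\|f\|_{V^\ast}+\nu^{-1}R_i C_p^2\|\nabla\tkone\|+\nu^{-1}C_s M_1^2.
\end{equation*}
Similarly, testing the temperature equation with $w=\tkone$, using $\hat b(\utkone,\tkone,\tkone)=0$ and $|\hat b(\ukone,\ttkone,\tkone)|\le C_s\|\nabla\ukone\|\,\|\nabla\ttkone\|\,\|\nabla\tkone\|\le C_s M_2\|\nabla\ukone\|\,\|\nabla\tkone\|$, and $|\hat b(\utkone,\ttkone,\tkone)|\le C_s M_1 M_2\|\nabla\tkone\|$, one obtains (after dividing by $\|\nabla\tkone\|$) a bound of the shape $\|\nabla\tkone\|\le \kappa^{-1}\|g\|_{D^\ast}+\kappa^{-1}C_s M_2\|\nabla\ukone\|+\kappa^{-1}C_s M_1 M_2 = M_2+\alpha_2\|\nabla\ukone\|+\dots$, where I will need to keep careful track of which cross term is charged to $\|\nabla\ukone\|$ versus absorbed.

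Finally I would add the velocity and temperature estimates and collect terms so the left side reads $(1-\alpha_1-\alpha_2)\|\nabla\ukone\|+(1-\nu^{-1}R_iC_p)\|\nabla\tkone\|$ (matching the claimed statement, including the slightly asymmetric $C_p$ rather than $C_p^2$ in the temperature coefficient, which presumably comes from how the $R_i$ cross term is split using Young's inequality), while the right side simplifies, via $\nu^{-1}\|f\|_{V^\ast}\le M_1$, $\kappa^{-1}\|g\|_{D^\ast}=M_2$, and the definitions $\alpha_1=C_s\nu^{-1}M_1$, $\alpha_2=C_s\kappa^{-1}M_2$, down to a constant bounded by $M_2+2M_1$; several $M_1$-proportional contributions (the $\nu^{-1}C_sM_1^2=\alpha_1 M_1\le M_1$ term, an $M_1$ from $\nu^{-1}\|f\|_{V^\ast}$) and the $M_2$ from $\kappa^{-1}\|g\|_{D^\ast}$ must be shown to sum to at most $M_2+2M_1$. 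The hypothesis $\alpha_1+\alpha_2<1$ and $\nu^{-1}R_iC_p^2<1$ is exactly what makes both left-hand coefficients positive, so no division by a nonpositive quantity occurs. The main obstacle I anticipate is bookkeeping: the Newton step has genuinely more nonlinear terms than the bare Boussinesq or Picard systems, and the right combination of Young's inequality splittings must be chosen so that (i) the $R_i$-coupling term lands with coefficient $\nu^{-1}R_iC_p$ on $\|\nabla\tkone\|$ on the left, (ii) all $\|\nabla\ukone\|^2$ and $\|\nabla\tkone\|^2$ terms are absorbed rather than left dangling, and (iii) the residual constant telescopes to exactly $M_2+2M_1$ rather than something larger — this last "cosmetic" bound is where a careless estimate would overshoot.
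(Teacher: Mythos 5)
Your proposal is correct and follows essentially the same route as the paper: Picard-step bounds $\|\nabla\utkone\|\le M_1$, $\|\nabla\ttkone\|\le M_2$, then testing the Newton step with $(\ukone,\tkone)$, absorbing $b(\ukone,\utkone,\ukone)$ and $\hat b(\ukone,\ttkone,\tkone)$ via $\alpha_1,\alpha_2$, adding the two estimates, and using $\nu^{-1}\|f\|_{V^\ast}\le M_1$, $\kappa^{-1}\|g\|_{D^\ast}=M_2$, $\alpha_1+\alpha_2<1$ to reach $M_2+2M_1$. The only minor remark is that the $C_p$ (rather than $C_p^2$) in the temperature coefficient is simply an inconsistency in the statement, not the result of a Young's-inequality splitting; the natural outcome of your (and the paper's) argument is the coefficient $1-\nu^{-1}R_iC_p^2$, which is positive by hypothesis.
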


\begin{rem}
The small data conditions for stability of Picard-Newton are the same as the uniqueness condition for the non-isothermal flow system.
\end{rem}

\begin{proof}
We begin by considering the stability bound resulting from the Picard step. This part of the proof follows analogously to Lemma \ref{Lemma:PicardBound} resulting in 
\begin{equation*}
\begin{cases}
 \|\nabla \utkone\| &\leq M_1  ,\\
 \|\nabla \ttkone\|&\leq M_2.
 \end{cases}
\end{equation*}

Next we consider \eqref{Newton} and choose $v=\ukone$ and $w=\tkone$. This gives
\begin{equation*}
\begin{cases}
\nu\|\nabla \ukone\|^2 &= (f,\ukone) +R_i ((0 ~\tkone)^T,\ukone)+b(\utkone, \utkone,\ukone)- b(\ukone, \utkone,\ukone),\\
\kappa \|\nabla \tkone\|^2&=(g,\tkone)+\hat{b}(\utkone,\ttkone,\tkone)- \hat{b}(\ukone, \ttkone,\tkone).
\end{cases}
\end{equation*}

We next use Cauchy-Schwarz, upper bound the trilinear term, use Poincar\'e, and reduce to get
\begin{equation*}
\begin{cases}
\nu\|\nabla \ukone\| &\leq \|f\|_{V^\ast} +R_i C_p^2\|\nabla\tkone\|+C_s\| \nabla\utkone\|^2+ C_s\| \nabla\ukone\| \|\nabla\utkone\|  ,\\
\kappa \|\nabla \tkone\|&\leq \|g\|_{D^\ast}+C_s\|\nabla\utkone\| \|\nabla\ttkone\| + C_s\|\nabla\ukone\| \|\nabla\ttkone\|.
\end{cases}
\end{equation*}

Using the stability bound for the Picard step and combining like terms
\begin{equation*}
\begin{cases}
(1-\alpha_1)\|\nabla \ukone\| &\leq \nu^{-1}\|f\|_{V^\ast} +\nu^{-1}R_i C_p^2\|\nabla\tkone\|+\alpha_1 M_1,\\
 \|\nabla \tkone\|&\leq \kappa^{-1}\|g\|_{D^\ast}+\alpha_2 M_1 + \alpha_2\|\nabla\ukone\|\label{PNTstab},
\end{cases}
\end{equation*}
where $\alpha_1=\nu^{-1}C_s M_1$ and $\alpha_2=\kappa^{-1}C_s M_2$. 

We add the equations and combine like terms to obtain
\begin{equation*}
(1-\alpha_1-\alpha_2)\|\nabla \ukone\| +  (1-\nu^{-1}R_i C_p)\|\nabla \tkone\| \leq \nu^{-1}\|f\|_{V^\ast} + \kappa^{-1}\|g\|_{D^\ast}+(\alpha_1+\alpha_2) M_1.
\end{equation*}
Using that $M_2=\kappa^{-1}\|g\|_{D^\ast}$ and $M_1=\nu^{-1}\| f\|_{V^\ast} +R_i C_p^2\nu^{-1} M_2$ we can upper bound the left hand side to get the bound
\begin{align*}
(1-\alpha_1-\alpha_2)\|\nabla \ukone\| +  (1-\nu^{-1}R_i C_p)\|\nabla \tkone\| &\leq M_2+(\alpha_1+\alpha_2 +1) M_1,
&\leq M_2+2 M_1.
\end{align*}
\end{proof}

One way to understand the effect of Picard's stability on Newton is to consider the case when $u^k$ does not satisfy the sufficient conditions for convergence of Newton. In this case, one would expect that the Newton step could produce solutions which are divergent. However this is not the case because of Picard's stability bound.\\

Picard-Newton is a two step method and so for clarity we split the error analysis into two theorems: a theorem for the error arising in the Picard step and a theorem for the iterations error after the Newton step. The analysis for the Picard step will follow very closely to the analysis for the usual Picard iteration and thereby includes the constants $M_1$ and $M_2$ that naturally arise in the analysis.
\begin{theorem}
Assume $C_p^2 Ri \nu^{-1} <1$. Then the Picard step \eqref{Picard} of Picard-Newton satisfies
$$\|\nabla (u-\utkone)\| + (1-R_i C_p^2 \nu^{-1})\|\nabla (T-\ttkone) \| \leq (\alpha_1+ \alpha_2) \|\nabla(u-\uk)\|.$$
In particular we have,
\begin{equation}\label{PicBoundT}
 \|\nabla (T-\ttkone) \| \leq  \alpha_2 \|\nabla(u-\uk)\|,
\end{equation}
and
\begin{equation}\label{PicBoundu}
\|\nabla (u-\utkone)\| \leq (\alpha_1+ \alpha_2) \|\nabla(u-\uk)\|.
\end{equation}
\end{theorem}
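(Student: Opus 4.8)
The plan is to mirror the proof of Lemma \ref{Lemma:PicardConv} almost verbatim, since the Picard step \eqref{Picard} is structurally identical to the standalone Picard iteration \eqref{PicardWF} except that the "previous iterate" feeding the convecting velocity is $\uk$ and the output is denoted $(\utkone,\ttkone)$. First I would note that the true solution $(u,T)$ of \eqref{boussWF} satisfies that same weak system trivially (it is a fixed point), so subtracting \eqref{boussWF} from \eqref{Picard} and writing $\tilde e^{k+1}=u-\utkone$, $\tilde e_T^{k+1}=T-\ttkone$ (with the understanding that the convecting velocity error is $u-\uk$) yields, for all $v\in V$, $w\in D$,
\begin{equation*}
\begin{cases}
b(\uk, \eutkone,v)+b(u-\uk, u,v)+\nu(\nabla \eutkone,\nabla v) &= R_i((0~\ettkone)^T,v),\\
\hat b(\uk, \ettkone,w)+\hat b(u-\uk, T,w)+\kappa(\nabla \ettkone,\nabla w) &= 0.
\end{cases}
\end{equation*}
Here one has to be careful to add and subtract the right cross term: the nonlinear term $b(\uk,\utkone,v)-b(u,u,v)$ splits as $b(\uk,\eutkone,v)+b(\uk-u,u,v)$, so the "error in the convecting field" is $\uk-u$, not $\utkone-u$ — this is exactly what makes the bound depend on $\|\nabla(u-\uk)\|$ rather than on $\|\nabla(u-\utkone)\|$.

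Next I would set $v=\eutkone$ and $w=\ettkone$; skew-symmetry kills $b(\uk,\eutkone,\eutkone)$ and $\hat b(\uk,\ettkone,\ettkone)$, leaving
\begin{equation*}
\begin{cases}
\nu\|\nabla \eutkone\|^2 &= R_i((0~\ettkone)^T,\eutkone)-b(u-\uk,u,\eutkone),\\
\kappa\|\nabla \ettkone\|^2 &= -\hat b(u-\uk,T,\ettkone).
\end{cases}
\end{equation*}
Then apply Cauchy--Schwarz and Poincar\'e to the Richardson term, the bound \eqref{bbound} to the trilinear terms, and the a priori estimates $\|\nabla u\|\leq M_1$, $\|\nabla T\|\leq M_2$ from Lemma \ref{BoussStability}, divide through by $\|\nabla\eutkone\|$ and $\|\nabla\ettkone\|$ respectively, and use $\alpha_1=C_s\nu^{-1}M_1$, $\alpha_2=C_s\kappa^{-1}M_2$ to get
\begin{equation*}
\begin{cases}
\|\nabla \eutkone\| &\leq C_p^2\nu^{-1}R_i\|\nabla\ettkone\|+\alpha_1\|\nabla(u-\uk)\|,\\
\|\nabla \ettkone\| &\leq \alpha_2\|\nabla(u-\uk)\|.
\end{cases}
\end{equation*}
The second inequality is precisely \eqref{PicBoundT}. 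Adding the two inequalities and moving the $C_p^2\nu^{-1}R_i\|\nabla\ettkone\|$ term to the left gives $\|\nabla \eutkone\|+(1-C_p^2\nu^{-1}R_i)\|\nabla\ettkone\|\leq(\alpha_1+\alpha_2)\|\nabla(u-\uk)\|$, the displayed estimate; since $1-C_p^2 R_i\nu^{-1}>0$ by hypothesis, dropping that nonnegative term yields \eqref{PicBoundu}.

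I do not expect a genuine obstacle here — the result is essentially a restatement of Lemma \ref{Lemma:PicardConv} with the fixed point $(u,T)$ in place of a second solution and with relabeled iterates. The only place demanding care is the bookkeeping of the linearization cross term described above (ensuring the error that appears on the right is $u-\uk$), and making sure the Richardson-number term is absorbed correctly so that the coefficient $1-C_p^2 R_i\nu^{-1}$ ends up multiplying $\|\nabla(T-\ttkone)\|$ on the left rather than polluting the contraction factor. Everything else is Cauchy--Schwarz, Poincar\'e, \eqref{bbound}, and the stability bounds already in hand.
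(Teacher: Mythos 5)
Your proposal is correct and is exactly the argument the paper intends: the paper's proof simply states that it follows analogously to Lemma \ref{Lemma:PicardConv}, and your write-up carries out that same subtraction, skew-symmetry, and bounding with $M_1$, $M_2$, arriving at the stated estimates. (Only a trivial sign slip in your side remark on the splitting of $b(\uk,\utkone,v)-b(u,u,v)$ given your convention $\eutkone=u-\utkone$; your displayed error equations and all subsequent bounds are correct.)
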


\begin{proof}
This proof follows analogously to Lemma \ref{Lemma:PicardConv}.

\end{proof}


Note that the bounds above on $(u-\tilde{u}^{k+1})$ and $(T-\tilde{T}^{k+1})$ both depend upon $(u-u^{k})$. This means the accuracy of $\tilde{T}^{k+1}$ depends on the accuracy of $u^k$ for the Picard step.\\ 

\begin{theorem}
Assume $R_i C_p^2 \nu^{-1}<1$ and 
\begin{equation*}
\beta_k:=\alpha_1+\alpha_2+C_s(\alpha_1+\alpha_2)[\kappa^{-1}+ \nu^{-1}]\|\nabla(u-u^k)\|<1
\end{equation*}
for any $k$. 
Then Picard-Newton converge quadratically 
for any $u^k$ satisfying
$$
(1-\max\{\beta_k, R_i C_p^2 \nu^{-1}\})^{-1}C_s (\alpha_1+\alpha_2)^2(\nu^{-1}+ \kappa^{-1}) \|\nabla(u-u^k)\|<1.
$$
\end{theorem}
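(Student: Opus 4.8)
The plan is to recognize that the Newton step \eqref{Newton} is an ordinary Newton step applied to $(\utkone,\ttkone)$ rather than to $(u^k,T^k)$, so the computation in the proof of the preceding Newton convergence lemma can be rerun line by line with $(\utkone,\ttkone)$ in place of $(u^k,T^k)$, and the resulting bounds then combined with the Picard-step estimates \eqref{PicBoundT}--\eqref{PicBoundu}. Write $\ekone=u-\ukone$, $\etkone=T-\tkone$, $\eutkone=u-\utkone$, $\ettkone=T-\ttkone$. First I would subtract \eqref{boussWF} from \eqref{Newton}, add and subtract $b(u,\ekone,v)$ in the momentum equation and $\hat b(\ukone,T,w)$ in the temperature equation, expand the remaining nonlinear terms about the true solution, and take $v=\ekone$, $w=\etkone$. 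As in the Newton lemma, four trilinear terms drop by skew-symmetry of $b$ and $\hat b$ in their last two arguments, leaving a pair of energy identities whose right-hand sides involve $R_i((0~\etkone)^T,\ekone)$ together with products of $\ekone$, $\eutkone$, $u$ (respectively of $\etkone$, $\ettkone$, $\eutkone$, $T$). Bounding these with \eqref{bbound}, Cauchy--Schwarz, Poincar\'e, and the a priori bounds $\|\nabla u\|\le M_1$, $\|\nabla T\|\le M_2$ of Lemma \ref{BoussStability}, and dividing through, gives the Picard-Newton analogue of \eqref{eq:N1}:
\begin{align*}
\bigl(1-\alpha_1-\nu^{-1}C_s\|\nabla\eutkone\|\bigr)\|\nabla\ekone\| &\le C_p^2\nu^{-1}R_i\|\nabla\etkone\| + \nu^{-1}C_s\|\nabla\eutkone\|^2, \\
\|\nabla\etkone\| &\le \kappa^{-1}C_s\|\nabla\eutkone\|\,\|\nabla\ettkone\| + \kappa^{-1}C_s\|\nabla\ekone\|\,\|\nabla\ettkone\| + \alpha_2\|\nabla\ekone\|.
\end{align*}

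Next I would insert the Picard-step bounds $\|\nabla\ettkone\|\le\alpha_2\|\nabla(u-u^k)\|$ and $\|\nabla\eutkone\|\le(\alpha_1+\alpha_2)\|\nabla(u-u^k)\|$ into both inequalities, add them, and collect the $\|\nabla\ekone\|$ and $\|\nabla\etkone\|$ terms on the left. Using $\alpha_2\le\alpha_1+\alpha_2$, the right-hand side is at most $C_s(\alpha_1+\alpha_2)^2(\nu^{-1}+\kappa^{-1})\|\nabla(u-u^k)\|^2$, the coefficient of $\|\nabla\ekone\|$ is at least $1-\alpha_1-\alpha_2-C_s(\alpha_1+\alpha_2)(\nu^{-1}+\kappa^{-1})\|\nabla(u-u^k)\|=1-\beta_k$, and the coefficient of $\|\nabla\etkone\|$ is $1-R_iC_p^2\nu^{-1}$; both are bounded below by $1-\max\{\beta_k,R_iC_p^2\nu^{-1}\}>0$. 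Dividing yields
$$\|\nabla\ekone\|+\|\nabla\etkone\| \le \bigl(1-\max\{\beta_k,R_iC_p^2\nu^{-1}\}\bigr)^{-1}C_s(\alpha_1+\alpha_2)^2(\nu^{-1}+\kappa^{-1})\,\|\nabla(u-u^k)\|^2,$$
which is the asserted quadratic estimate; the iteration is a contraction on the total error $\|\nabla\ekone\|+\|\nabla\etkone\|$ precisely when this scalar prefactor times $\|\nabla(u-u^k)\|$ is below $1$, i.e.\ under the stated smallness hypothesis on $u^k$.

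I expect the one genuinely delicate step to be the first: re-deriving the reduced pair above, namely carefully expanding the two Newton nonlinearities of \eqref{Newton} about the true solution (keeping track of the extra Newton-source terms $b(\utkone,\utkone,v)$ and $\hat b(\utkone,\ttkone,w)$), checking which four trilinear terms vanish, and verifying that only $\eutkone$ and $\ettkone$---not $\utkone$ or $\ttkone$ themselves---appear in the places the previous iterate would have occupied. Everything afterward is routine bookkeeping with \eqref{bbound} and the constants $\alpha_1,\alpha_2$. The conceptual point worth highlighting in the write-up is that, because the Picard step already controls \emph{both} $\|\nabla\eutkone\|$ and $\|\nabla\ettkone\|$ by multiples of $\|\nabla(u-u^k)\|$ alone, the quantities that governed the Newton convergence basin are now all dominated by the velocity error from the previous step; this is why $\beta_k$ and the smallness condition here involve only $\|\nabla(u-u^k)\|$ and not $\|\nabla(T-T^k)\|$.
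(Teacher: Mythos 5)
Your proposal is correct and follows essentially the same route as the paper: subtract the weak formulations, exploit skew-symmetry to reach the reduced pair of error inequalities, insert the Picard-step bounds \eqref{PicBoundT}--\eqref{PicBoundu}, add, and bound the left-hand coefficients below by $1-\max\{\beta_k,\,C_p^2R_i\nu^{-1}\}$. The only differences are cosmetic (your sign convention for the errors and moving the $\nu^{-1}C_s\|\nabla\eutkone\|\,\|\nabla\ekone\|$ term to the left a step earlier), so no further comparison is needed.
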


\begin{proof}
We subtract \eqref{Newton} from \eqref{boussWF}, then add and subtract terms and we the skew symmetry of the trilinear terms to write 
$$\begin{cases}
  b(\ekone, \eutkone,\ekone)+ b(\ekone, u,\ekone)  +\nu\|\nabla \ekone \|^2&= (R_i (0 ~\etkone)^T,\ekone) +b(\eutkone, \eutkone,\ekone),  \\
\hat{b}(\ekone, \ettkone,\etkone)  +\hat{b}(\ekone, T,\etkone) +\kappa \|\nabla \etkone \|^2&=\hat{b}(\eutkone, \ettkone,\etkone),
\end{cases}$$
where $\etkone=\tkone-T$ and $\ekone=\ukone-u$.

We now use the bounds from Lemma \ref{BoussStability} to get the upper bounds
$$\begin{cases}
  \nu\|\nabla \ekone \|^2&\leq C_p^2R_i \| \nabla\etkone\| \|\nabla\ekone\| + C_s\|\nabla\eutkone\|^2 \|\nabla\ekone\| + C_s \|\nabla\ekone\|^2\|\nabla\eutkone\|+ \alpha_1 \nu \|\nabla\ekone\|^2 ,  \\
\kappa \|\nabla \etkone \|^2&\leq C_s\| \nabla\eutkone\| \|\nabla\ettkone\| \|\nabla\etkone\|  +C_s\|\nabla\ekone\|\|\nabla\ettkone\|\|\nabla\etkone\|  + \kappa \alpha_2\|\nabla\ekone\| \|\nabla\etkone\| ,
\end{cases}$$
which reduces to
$$\begin{cases}
(1-\alpha_1) \|\nabla \ekone \|&\leq C_p^2  \nu^{-1} R_i \| \nabla\etkone\| + C_s \nu^{-1}\|\nabla\eutkone\|^2 + C_s \nu^{-1} \|\nabla\ekone\|\|\nabla\eutkone\|,  \\
 \|\nabla \etkone \|&\leq C_s\kappa^{-1}\| \nabla\eutkone\| \|\nabla\ettkone\|  +C_s\kappa^{-1}\|\nabla\ekone\|\|\nabla\ettkone\|+  \alpha_2\|\nabla\ekone\| .
\end{cases}$$

Next we continue bounding using \eqref{PicBoundu} and \eqref{PicBoundT} to obtain
$$\begin{cases}
(1-\alpha_1) \|\nabla \ekone \|&\leq C_p^2  \nu^{-1} R_i \| \nabla\etkone\| + C_s \nu^{-1}(\alpha_1+\alpha_2)^2\|\nabla\ek\|^2 + C_s \nu^{-1} (\alpha_1+\alpha_2)\|\nabla\ekone\|\|\nabla\ek\|,  \\
 \|\nabla \etkone \|&\leq C_s\kappa^{-1}\alpha_2(\alpha_1+\alpha_2) \|\nabla\ek\|^2  +C_s\kappa^{-1}\alpha_2\|\nabla\ekone\|\|\nabla\ek\|+  \alpha_2\|\nabla\ekone\| .
\end{cases}$$

Adding the equations and reducing yields
\begin{align*}
(1-\alpha_1-\alpha_2-C_s[\kappa^{-1}\alpha_2&+ \nu^{-1} (\alpha_1+\alpha_2)]\|\nabla\ek\|) \|\nabla \ekone \|+(1-C_p^2  \nu^{-1} R_i) \|\nabla \etkone \|\\
&\leq  C_s (\alpha_1+\alpha_2)(\nu^{-1}(\alpha_1+\alpha_2) +\kappa^{-1}\alpha_2) \|\nabla\ek\|^2 \\
 &\leq C_s (\alpha_1+\alpha_2)^2(\nu^{-1}+ \kappa^{-1})\|\nabla\ek\|^2.
 \end{align*}
We now lower bound the left hand side as
\begin{multline*}
(1-\max\{\beta_k, C_p^2 R_i \nu^{-1}\})(\|\nabla \ekone \| +\|\nabla \etkone \| )\leq (1-\alpha_1-\alpha_2-C_s[\kappa^{-1}\alpha_2+ \nu^{-1} (\alpha_1+\alpha_2)]\|\nabla\ek\|) \|\nabla \ekone \| \\
+ (1-C_p^2R_i \nu^{-1})\|\nabla \etkone \|,
\end{multline*}
and upper bound the right hand side to obtain
\begin{equation*}
  \|\nabla \ekone \| +\|\nabla \etkone \|\\
  \leq (1-\max\{\beta_k, C_p^2 R_i \nu^{-1}\})^{-1} C_s (\alpha_1+\alpha_2)^2(\nu^{-1}+ \kappa^{-1})\|\nabla\ek\|^2. 
\end{equation*}

\end{proof}


We provide table \ref{comparison} below for a more clear comparison of the sufficient conditions for Newton and Picard-Newton. First notice that the assumptions for Picard-Newton include similar terms, and in fact they both require $C_p^2 R_i \nu^{-1}<1$. However, it is clear that Picard-Newtons assumptions are less restrictive as they do not include any assumptions on $T$, and the velocity term in $\beta_k$ in Picard-Newton is scaled by $(\alpha_1+\alpha_2)$ compared to Newton (note that this same condition ensures $(\alpha_1+\alpha_2)<1$). These weakened restriction for Picard-Newton are due to the Picard step allowing $u^k$ to control the error of $T$.\\

Similarly, the sufficient condition on the initial guess for convergence of Picard-Newton is scaled by $(\alpha_1+\alpha_2)^2$ compared to Newton. Because another sufficient condition for Picard-Newton ensures that $\alpha_1+\alpha_2<1$, this allows more $u^k$ to satisfy the equation
\begin{equation*}
C_s(1-\max\{\beta_k, C_p^2 R_i \nu^{-1}\})^{-1}(\alpha_1+\alpha_2)^2(\nu^{-1}+ \kappa^{-1})\| \nabla (u-u^k)\|< 1.
\end{equation*}
However there is another important connection to be made between Picard and the sufficient conditions. If we consider just Picard, it is required for its convergence that $\alpha_1+\alpha_2<1$. Then for Picard-Newton,  $\alpha_1$ and $\alpha_2$ come from the error of the Picard step and appear in the sufficient conditions and assumptions of the method. This means that Picard-Newtons conditions for convergence are potentially being aided by Picard's convergence properties. Furthermore, recall that the assumption for uniqueness of solutions to non-isothermal flow system includes $\alpha_1+\alpha_2<1$. This further shows the assumption is reasonable and therefore Picard-Newton's sufficient conditions for convergence of Picard-Newton are improved when compared to Newton.\\



\begin{table}[H]
\centering
\begin{tabular}{|c|}
\hline
\textbf{Sufficient conditions}\\
\hline
Newton \\ 
\hline
$\gamma_k=\alpha_1+\alpha_2+\nu^{-1}C_s\|\nabla (u-u^k)\|+\kappa^{-1}C_s \|\nabla (T-T^k)\|<1$ \\
 $ C_p^2\nu^{-1}R_i<1$ \\
 \\
 $(1-\max\{\gamma_k,C_p^2 R_i \nu^{-1}\})^{-1}(\kappa^{-1}+\nu^{-1})(\|\nabla (u-u^k)\|+\|\nabla (T-T^k)\|)<1$\\
\\
 \hline
 Picard-Newton\\
 \hline
 $\beta_k=\alpha_1+\alpha_2+C_s(\alpha_1+\alpha_2)[\kappa^{-1}+ \nu^{-1}]\|\nabla(u-u^k)\|<1$\\
  $C_p^2 R_i \nu^{-1} <1$ \\
  \\
   $(1-\max\{\beta_k, C_p^2 R_i\nu^{-1}\})^{-1}(\alpha_1+\alpha_2)^2(\nu^{-1}+ \kappa^{-1}) \|\nabla(u-u^k)\|<1$\\
  \\
 \hline
\end{tabular}
\caption{Shown above is a comparison of sufficient conditions for Newton and Picard-Newton convergence.}\label{comparison}
\end{table}

\section{Numerical Results}
We give now numerical results for two test problems for (AA-)Picard-Newton applied to the non-isothermal flow system. We consider the non-isothermal flow system  with $g=0$, $f=0$, $\nu=\kappa=10^{-1}$, and $Ri$ is varied which varies $Ra$ for the problems. The domain $\Omega$ and boundary conditions are different for each test and will therefore be provided in the following subsections.\\

Let $\tau_h$ be a barycentric refined triangular mesh for $\Omega$. We use $X_h=\mathbb{P}_2(\tau_h) \cap X$, $Q_h=\mathbb{P}_1^{disc}(\tau_h)\cap Q$, and $D_h=\mathbb{P}_2(\tau_h) \cap D$. We note that $(X_h,Q_h)$ are LBB stable in this setting, and also provide divergence free velocity solutions \cite{arnold:qin:scott:vogelius:2D}, so all the analysis of the previous sections applies. We also choose the initial guess to be $(u_0, p_0,T_0)=(0,0,0)$.
Using the B-norm defined below, the convergence criteria will be given as
$$\|(u^k,T^k)-g_{PN}(u^k,T^k)\|_{B}:=\sqrt{\nu \|\nabla(u^k-u^{k-1})\|^2+\kappa \|\nabla(T^k-T^{k-1})\|^2}<10^{-8},$$
and we use 200 iterations as the maximum number of allowed iterations.\\

\subsection{Differentially Heated Cavity}\label{DiffHeat}
The first numerical test is the classical differentially heated cavity problem \cite{Layton89}. The domain is the unit square $\Omega=(0,1)^2$, with boundary conditions
\begin{equation*}
\begin{cases}
u&= 0 ~\text{ on }\partial\Omega,\\
T(0,y)&=0,\\
T(1,y)&=1, \\
\nabla T\cdot n&=0, \text{ on} ~x=0, ~x=1.
\end{cases}
\end{equation*}
We use a mesh with max element diameter $h=1/64$ and note that all results are comparable for other choices of h that we tested. Solutions to this system with $Ra=10000$ are shown in figure \ref{Sol}. We begin the study of the convergence of (AA-)Picard-Newton for varying $Ra$ by considering Picard-Newton and then AA-Picard-Newton.\\

\subsubsection{Picard-Newton}
Convergence results of Picard-Newton are shown in figure \ref{PN} for $Ra=50000$, $100000$, $150000$, $200000$, and $250000$. We observe that Picard-Newton converges for each $Ra$, and quadratic convergence is observed in the asymptotic range. As Ra increases, the number of iterations required for convergence increases as well. We observe the method remains stable even when it is (seemingly) not making progress in its residual. \\

\begin{figure}[H]
\centering
\includegraphics[width=2.5in, height=2.5in, clip, trim=4cm 2cm 2.5cm 1cm]{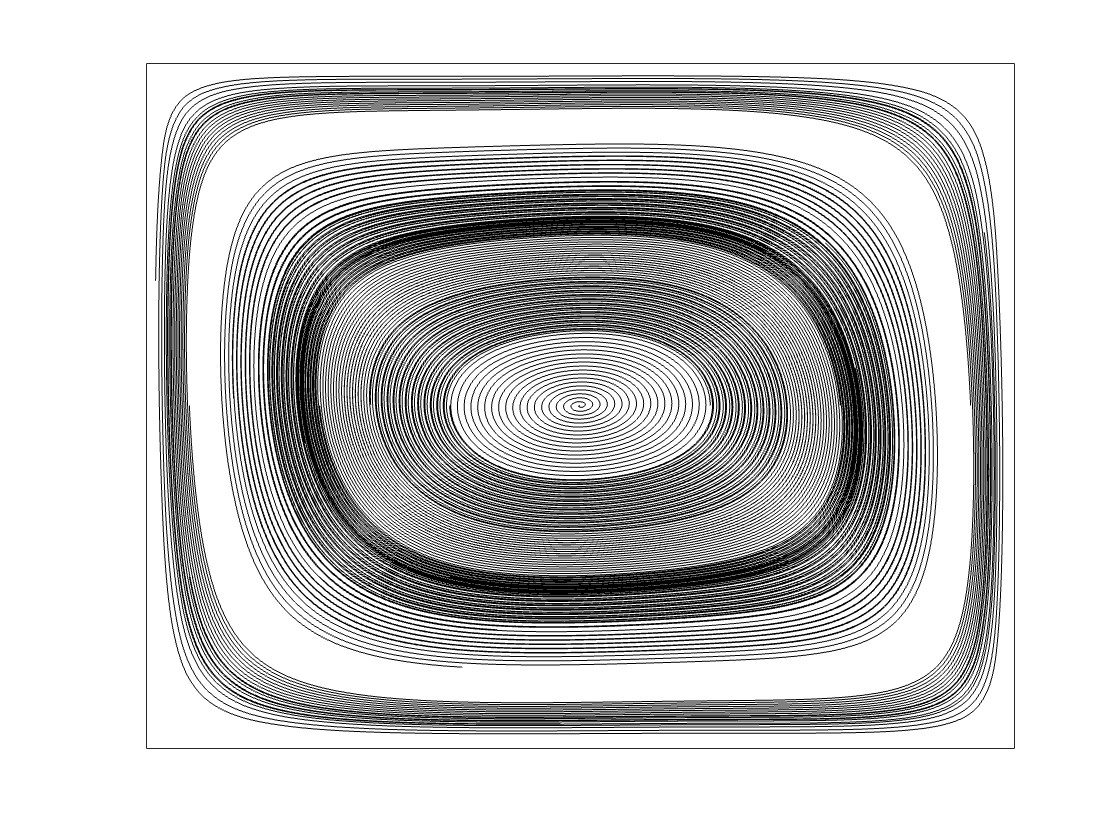}
\includegraphics[width=2.5in, height=2.5in, clip, trim=5.5cm 2cm 4cm 1cm]{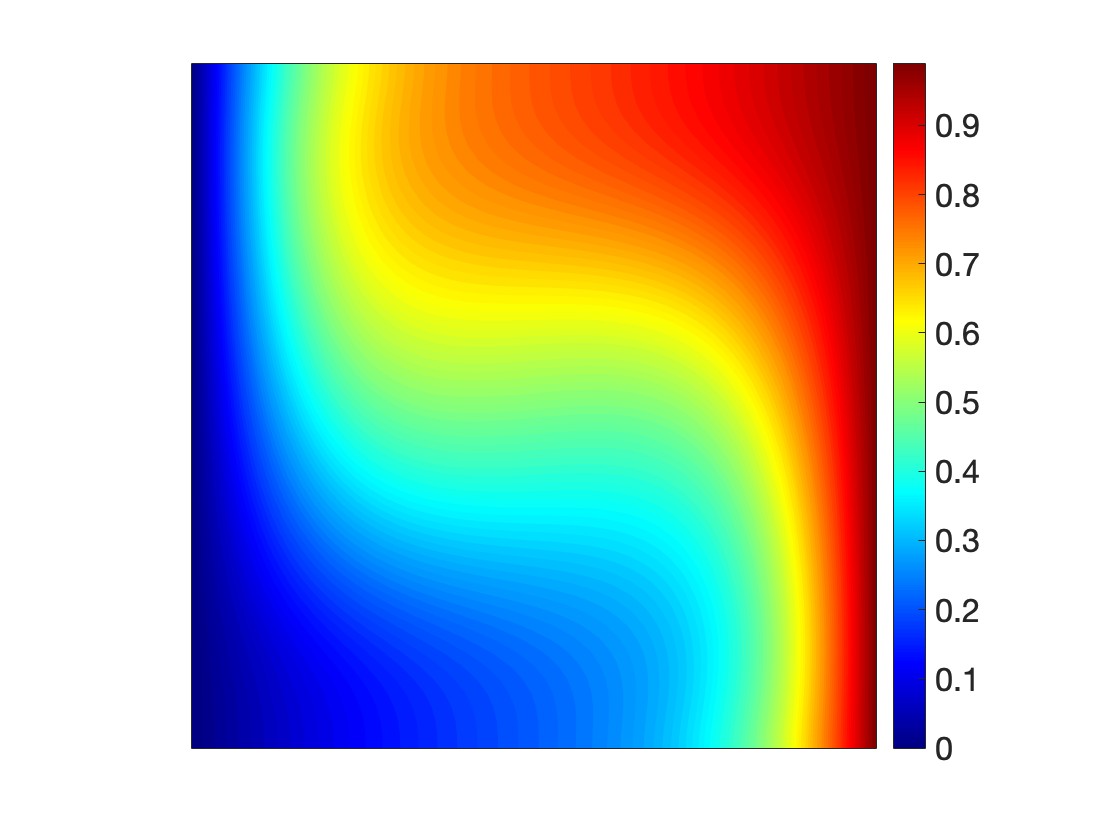}
\caption{Shown above is the computed  solution of the differentially heated cavity problem for velocity streamlines (left) and temperature contours (right) for $Ra=10000$}\label{Sol}
\end{figure}

For comparison, we also perform tests using Picard alone and Newton alone. In figure \ref{P_N_PN} (left) we see that Picard converges within 200 iterations for $Ra$ up to $10000$. For higher $Ra$, Picard does not converge, but it does remain stable. Newton is able to converge for slightly higher $Ra$ than Picard, as shown in figure \ref{P_N_PN} (left): it converges for up to $Ra=15000$, but above this $Ra$ we observe divergence. Moreover, as is typical for Newton, when it diverges, it blows up. 
We observe then that that Picard-Newton is able to converge for $Ra$ a full order of magnitude higher than Picard and Newton.\\

\begin{figure}[H]
\centering
\includegraphics[scale=.19]{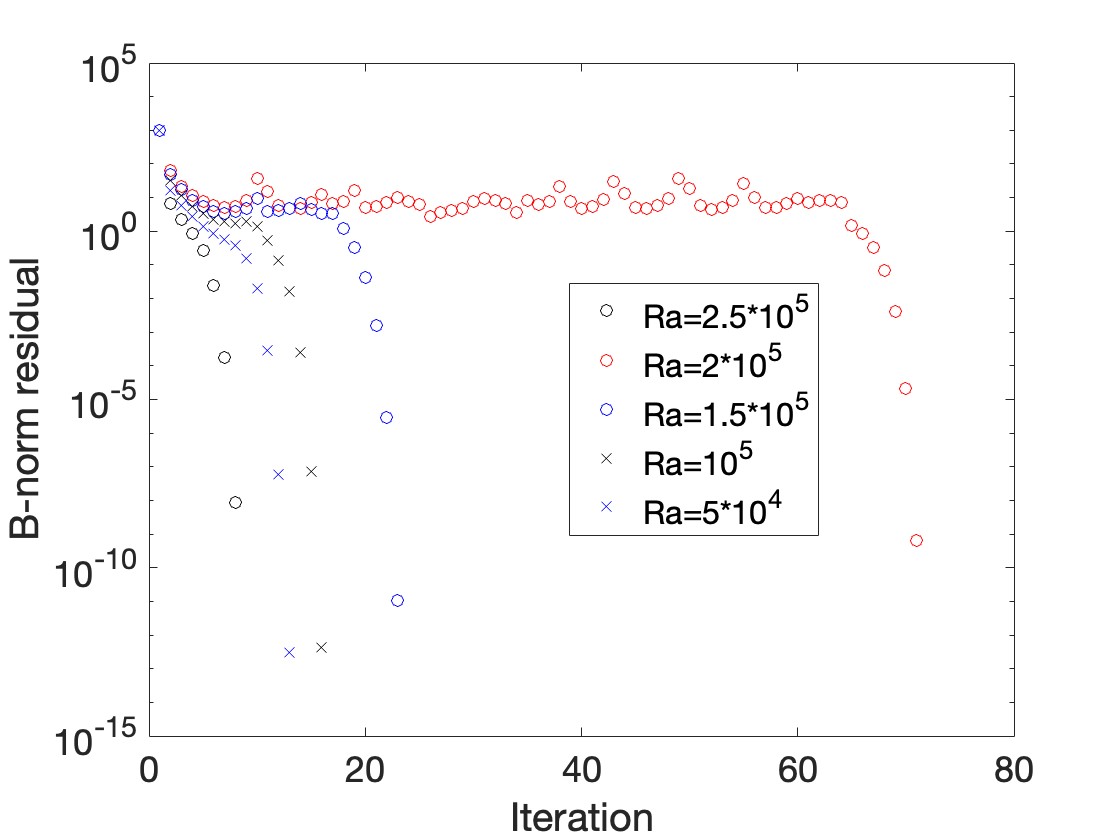}
\caption{Shown above are convergence plots for Picard-Newton at varying $Ra$.}\label{PN}
\end{figure}

\begin{figure}[H]
\centering
\includegraphics[scale=.19]{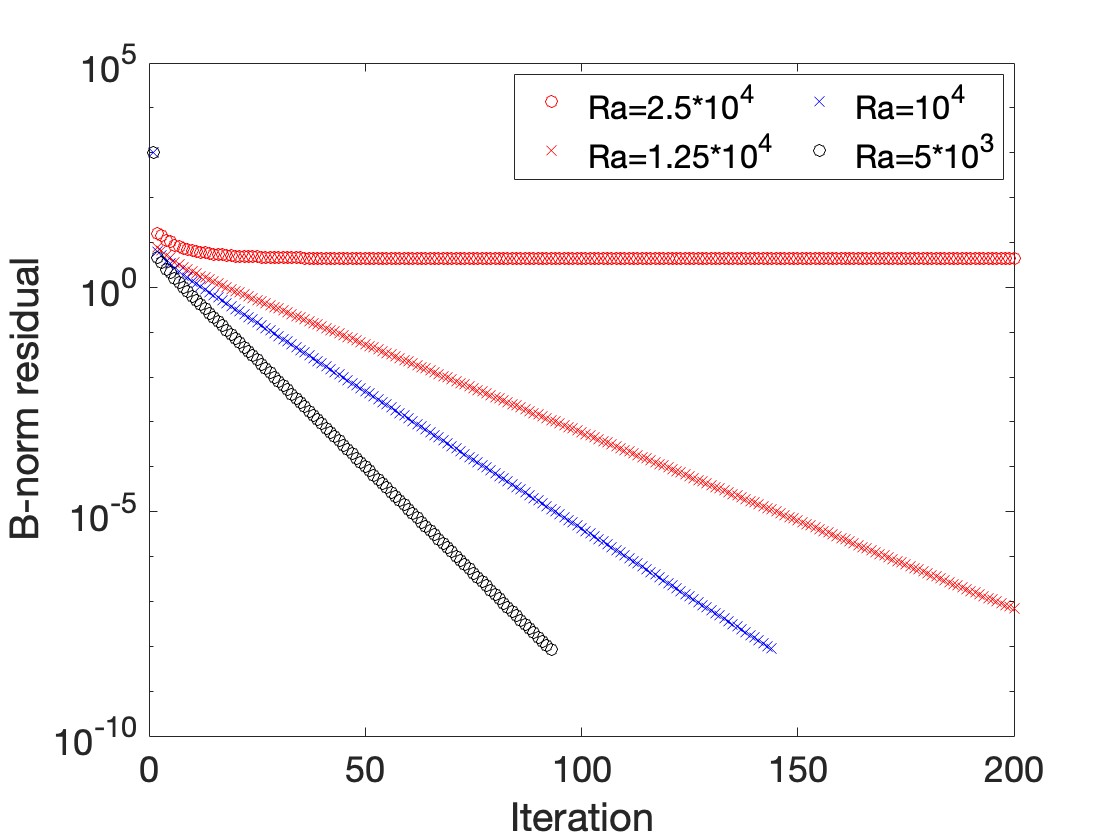}
\includegraphics[scale=.19]{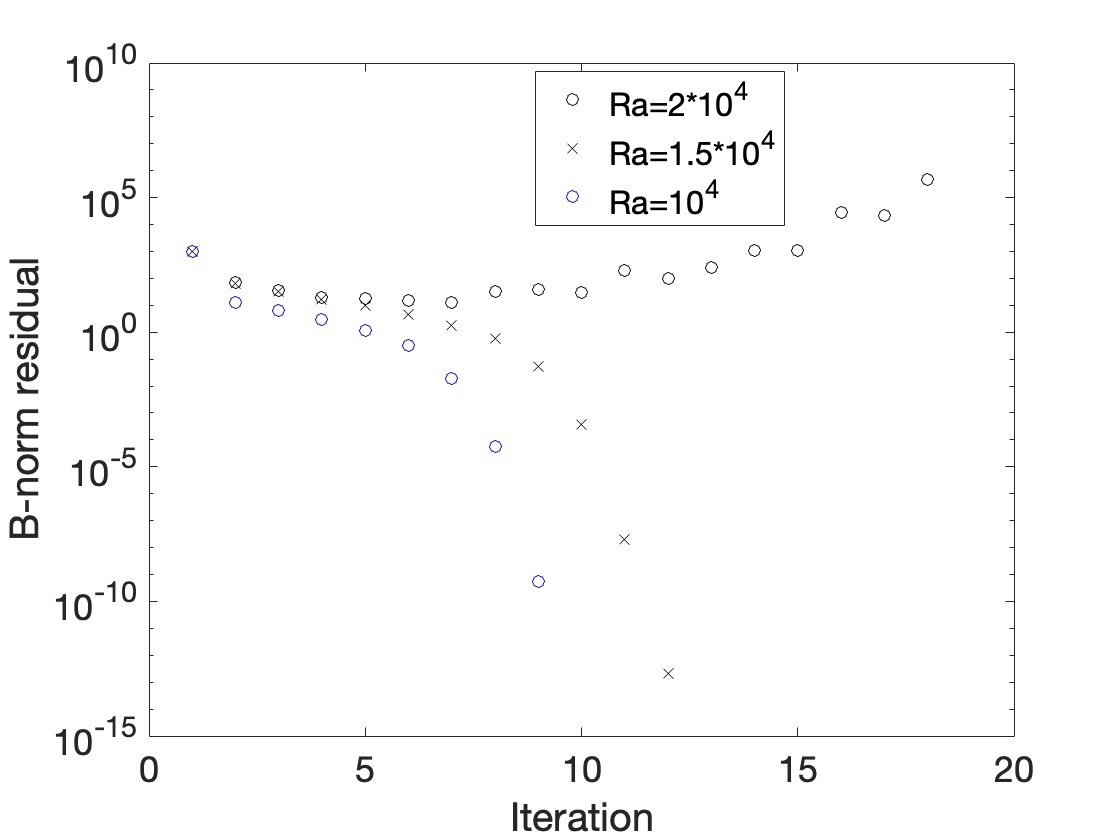}
\caption{Shown above are convergence plots for Picard (left) and Newton(right) at varying $Ra$.}\label{P_N_PN}
\end{figure}

\subsubsection{AA-Picard-Newton for Differentially Heated Cavity}
We now apply AA-Picard-Newton to the differentially heated cavity problem.
Recall that for this problem Picard-Newton converges for $Ra$ as high as $250000$. Convergence for AA-Picard-Newton are shown in figure \ref{AAfig} for varying $Ra$ and $m=1$ (left) and $m=3$ (right). AA-Picard-Newton shows significant improvement in convergence for higher $Ra$ both in iteration count and solvability. Convergence is even further improved with increased depth: for $m=1$ we observe convergence up to $Ra=500000$ and with $m=3$ convergence is obtained up to $Ra=750000$ (and convergence is accelerated by $m=3$ over $m=1$ for $Ra$ where $m=1$ converges). \\


\begin{table}[H]\label{table:Ra3}
\centering
\begin{tabular}{|c|c|}
\hline
Method& Max $Ra$ for convergence\\
\hline
AA-Picard-Newton: m=3 & $750000$\\
AA-Picard-Newton: m=1 & $500000$\\
Picard-Newton&$250000$	\\
Picard&$10000$	\\
Newton&$15000$	\\
\hline
\end{tabular}
\caption{Shown above is the maximum $Ra$ (to the nearest 1000) for which each method converges.}
\end{table}

\begin{figure}[H]
\centering
\includegraphics[scale=.19]{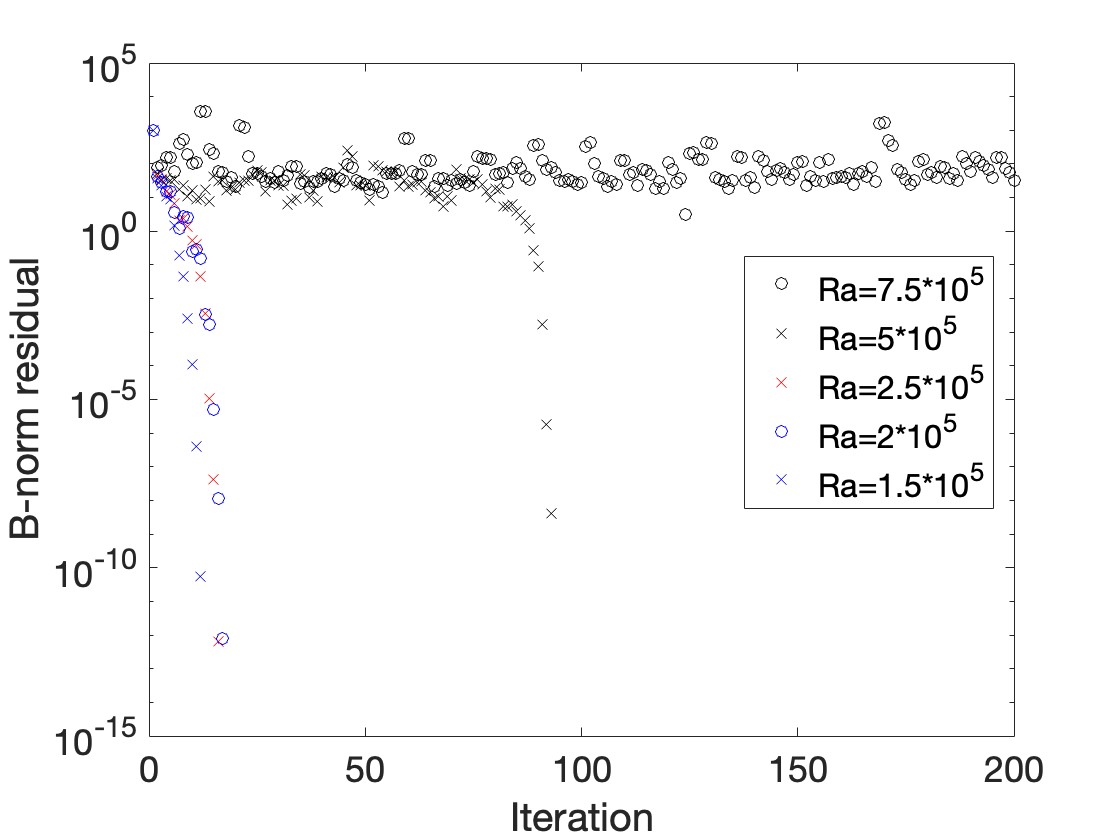}
\includegraphics[scale=.19]{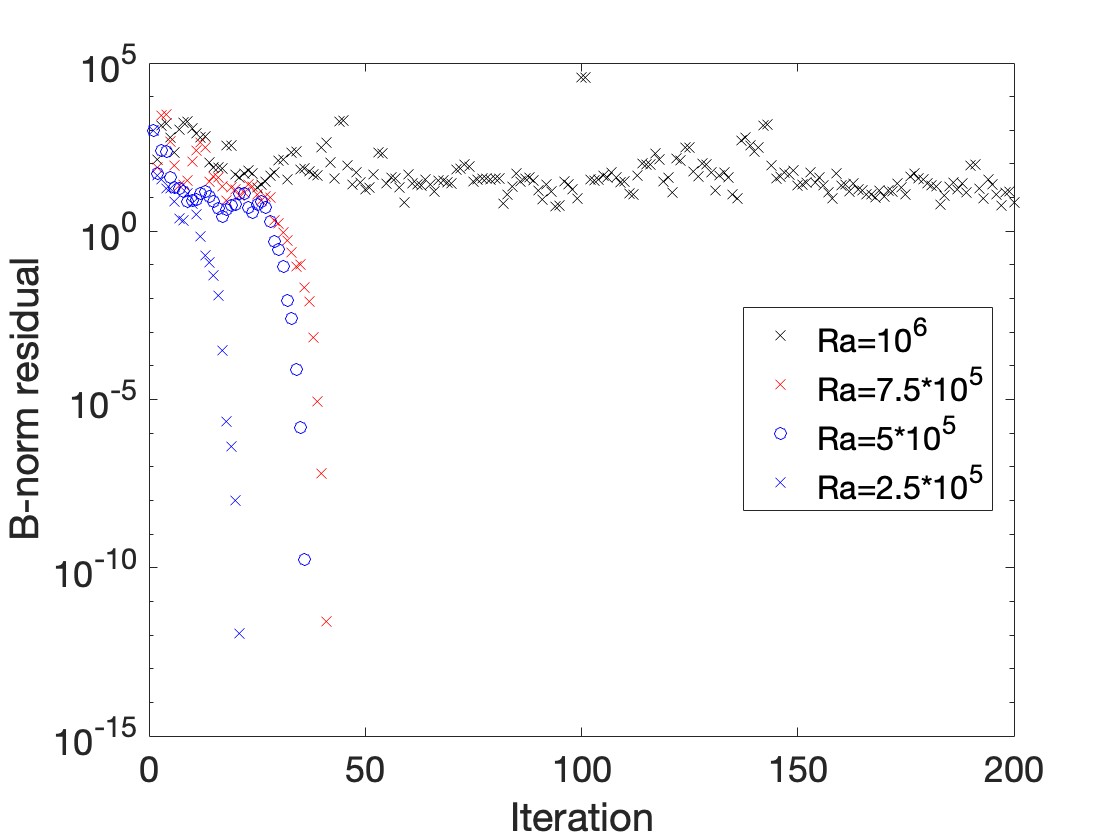}
\caption{Convergence plots for AA-Picard-Newton with $m=1$ (left) and $m=3$ (right) at varying $Ra$.}\label{AAfig}
\end{figure}

\subsection{Heated Cavity on a Complex Domain}\label{Complex}
The second numerical test is on a more complex domain shown in figure \ref{complexSol}, with boundary conditions
\begin{equation*}
\begin{cases}
u&= 0 ~~\text{on }\partial\Omega ,\\
T(x,1)&=1 ,\\
T(x,0)&= \frac{2x}{7},\\
\nabla T\cdot n&=0 ~~0<y<1.
\end{cases}
\end{equation*}
Solutions for $u$ and $T$ shown on the domain in figure \ref{complexSol} for $Ra=1000$. We use a Delaunay mesh with max element diameter $h=7/64$, and again note that all results are comparable for other mesh sizes that we tested. We again begin the study of the convergence of (AA-)Picard-Newton for this heated cavity problem by considering Picard-Newton and then AA-Picard-Newton.

\begin{figure}[H]
\centering
\includegraphics[width=2.5in, height=1.5in ,clip, trim=2cm 4.5cm 1cm 4.5cm]{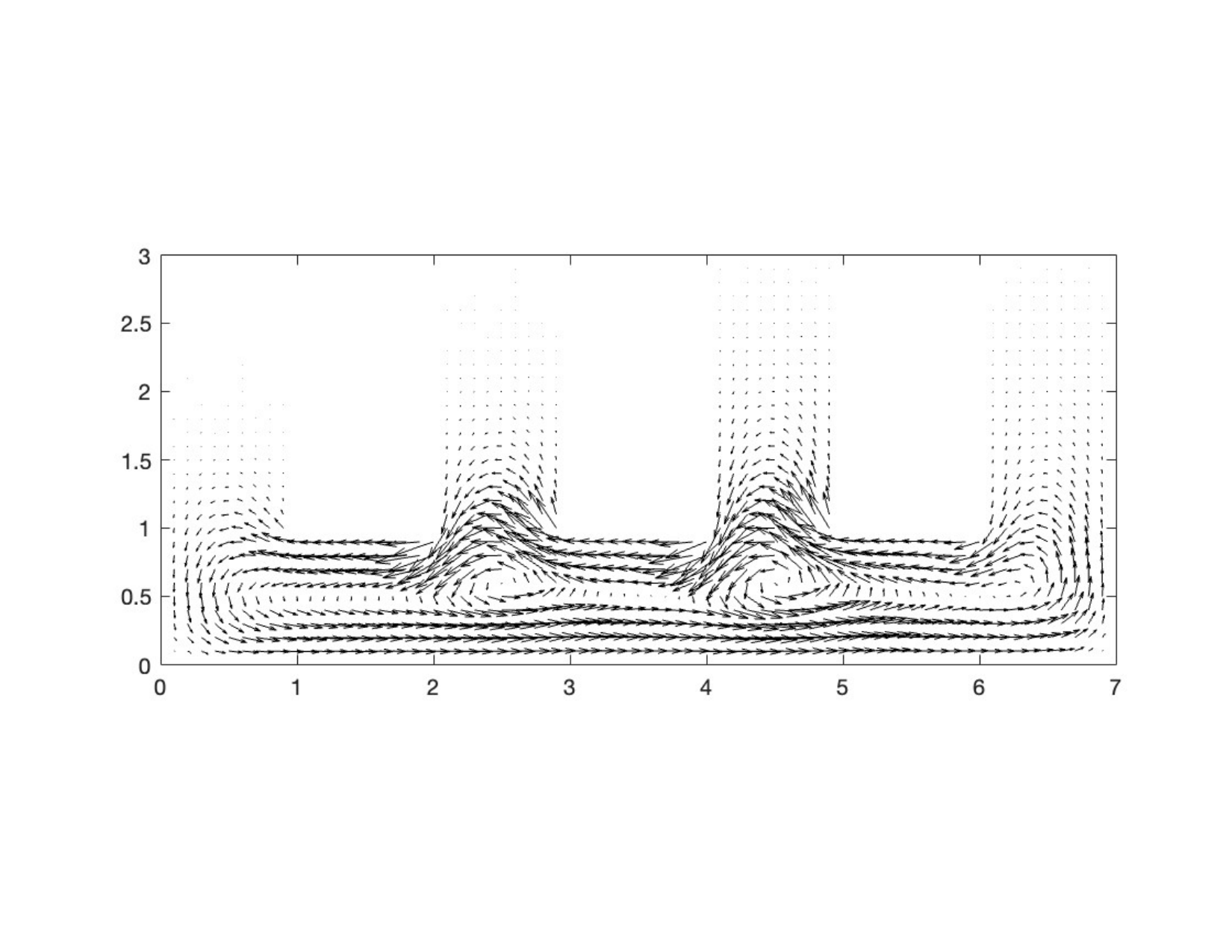}
\includegraphics[width=2.5in, height=1.5in ,clip, trim=2cm 5cm 1cm 5cm]{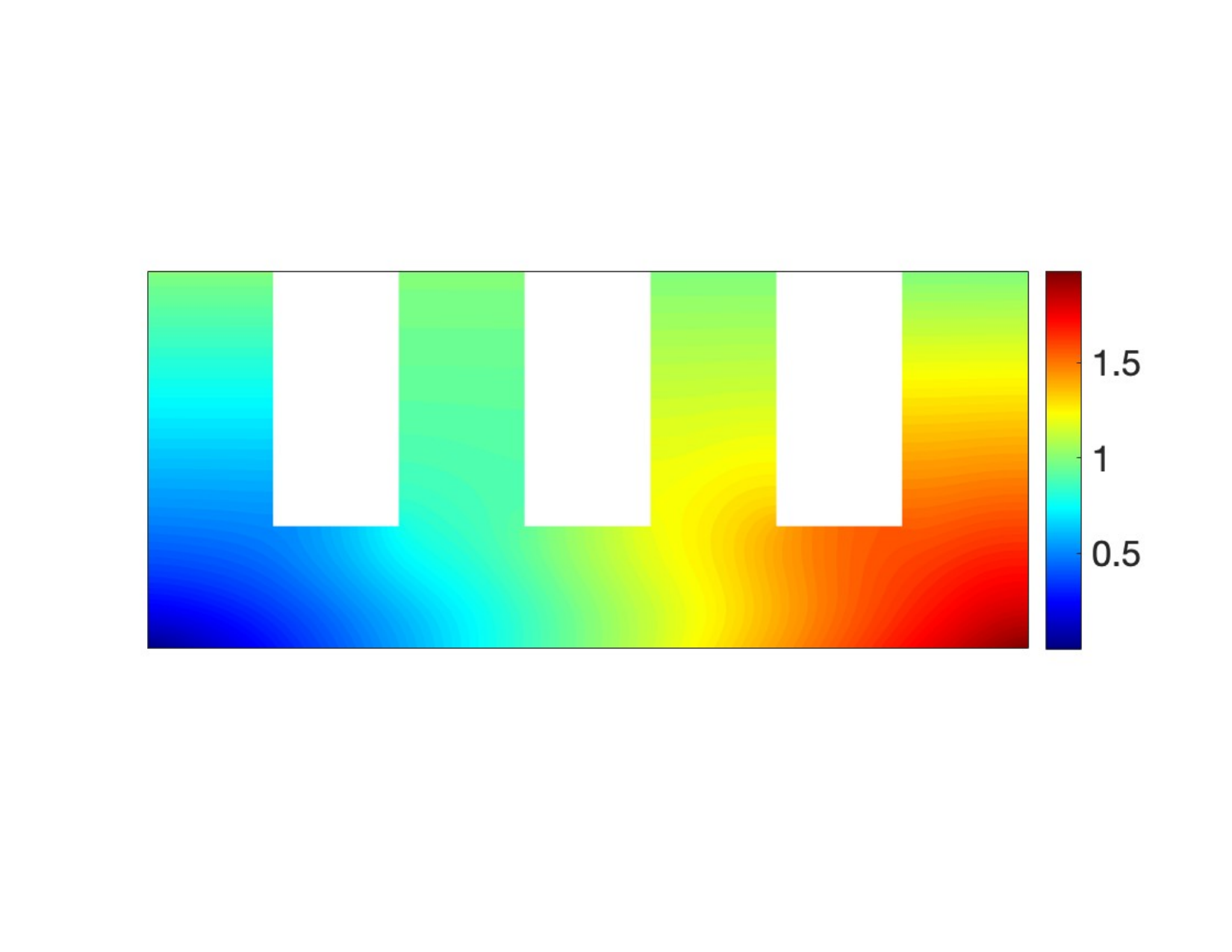}
\caption{Shown above is the computed  solution for $u$ (left) and $T$ (right) for $Ra=1000$ in the second numerical test.}\label{complexSol}
\end{figure}

\subsubsection{Picard-Newton}
The convergence results are shown in figure \ref{P2PN}, for $Ra$ $=$ $5000$, $10000$, $15000$, $20000$, $25000$, \text{ and }$50000$. As in the previous test problem, we observe quadratic convergence for each $Ra$ which Picard-Newton converges, for (up to  $Ra=10000$). For $Ra>10000$, we do not observe convergence within 200 iterations but note the computed solutions remain stable.

Again for comparison, we solve the test problem using both Picard alone and Newton alone, and display their convergence results in figure \ref{P2P_N_PN}. We see that Picard converges linearly for $Ra$ up to $1000$. For higher $Ra$, Picard does not converge but remains stable. Newton, in figure \ref{P2P_N_PN} (left), converges for $Ra$ up to $2500$ which is slightly higher than Picard. For $Ra=5000$, convergence is not achieved but Newton curiously remains stable. For $Ra>5000$ we see divergence, and solutions blow up. In comparison, using Picard-Newton we get convergence for $Ra=20000$ with solutions that remain stable for all $Ra$ that we tested. \\

\begin{figure}[H]
\centering
\includegraphics[scale=.19]{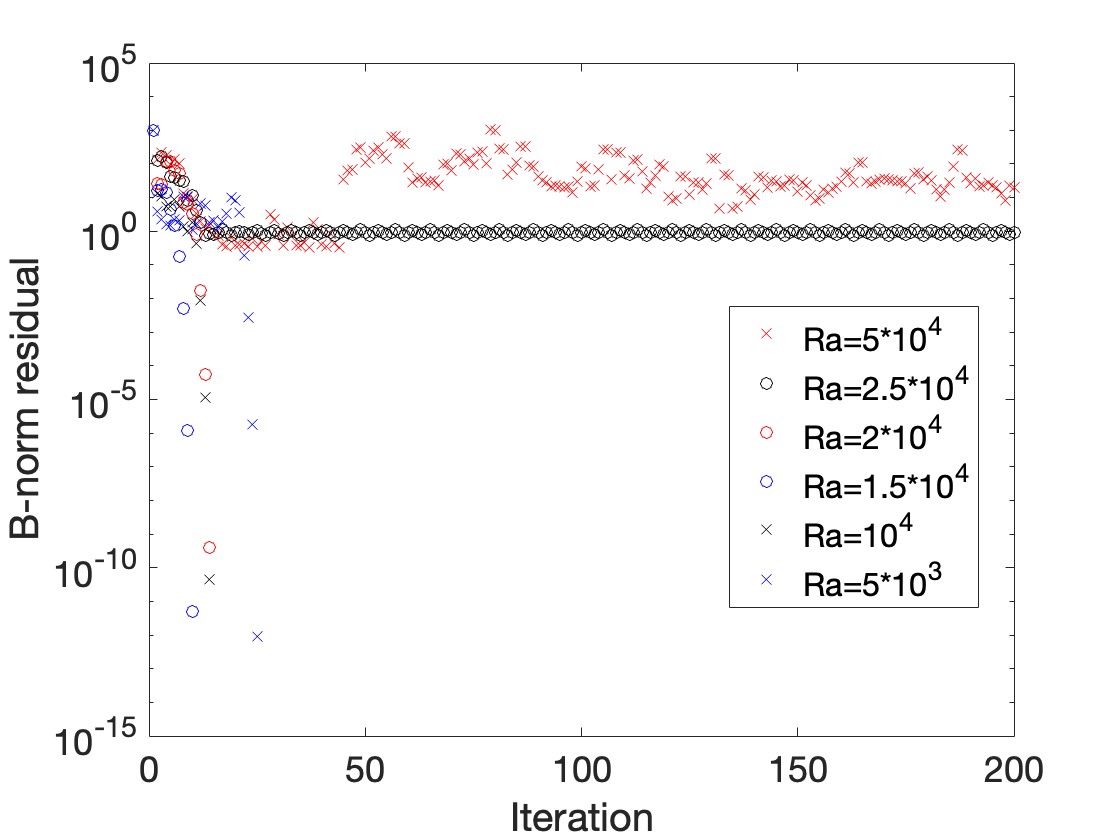}
\caption{Shown above are convergence plots for Picard-Newton at varying $Ra$.}\label{P2PN}
\end{figure}

\begin{figure}[H]
\centering
\includegraphics[scale=.19]{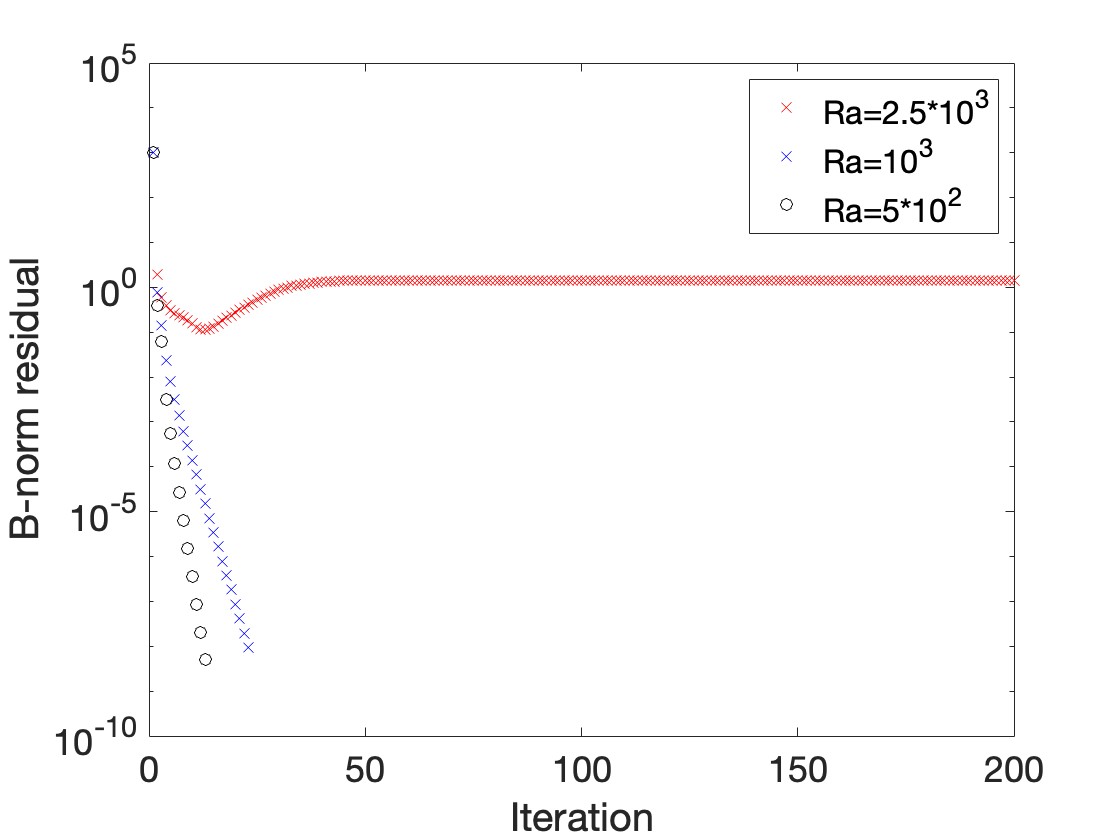}
\includegraphics[scale=.19]{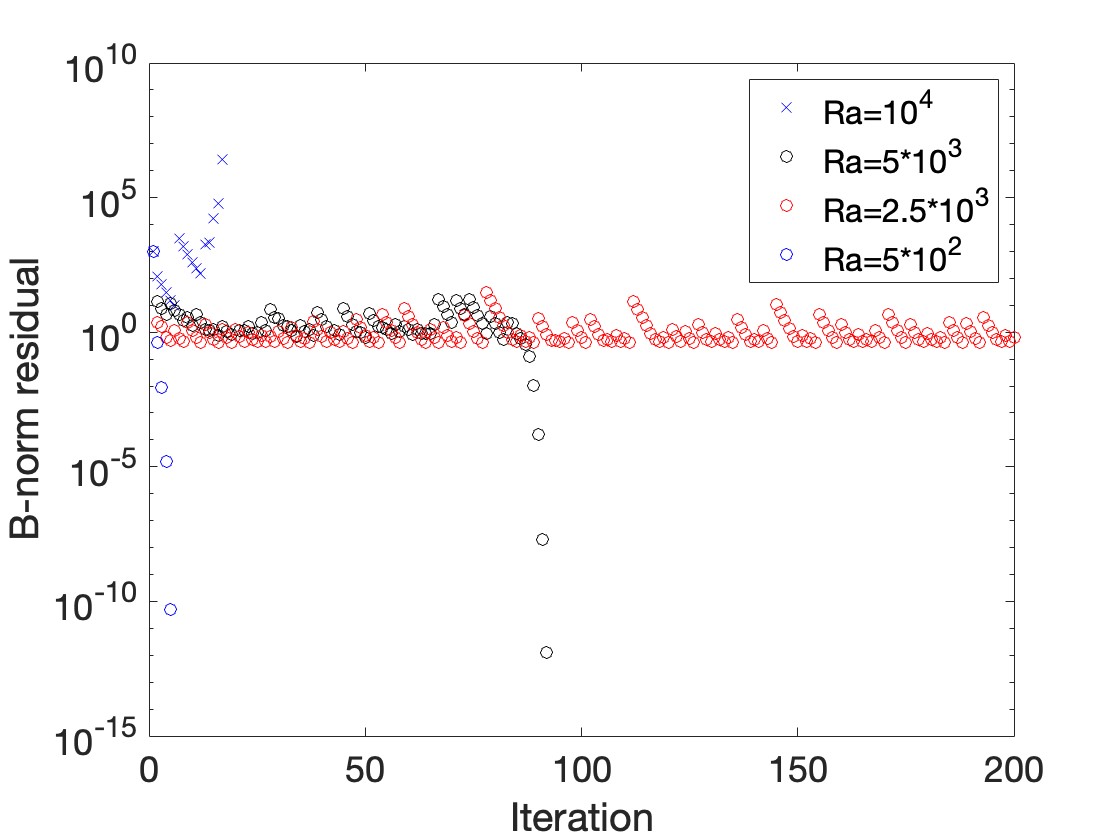}
\caption{Shown above are convergence plots for Picard (left) and Newton (right) at varying $Ra$.}\label{P2P_N_PN}
\end{figure}

\subsubsection{AA-Picard-Newton on a Complex Domain}
We next apply AA-Picard-Newton to the heated cavity problem on a complex domain from section \ref{Complex}. Recall that for this problem (AA-)Picard-Newton converges in 200 iterations for $Ra$ up to $10000$. AA-Picard-Newton achieves convergence with depth $m=1$ for $Ra$ up to $125000$ as shown in figure \ref{P2AA}. This is further improved with depth $m=3$ where AA-Picard-Newton converges  for $Ra$ up to $75000$. As before, for higher $Ra$ we see stability for the method and it may converge if allowed to continue past 200 iterations. \\

\begin{table}[H]\label{table:Ra4}
\centering
\begin{tabular}{|c|c|}
\hline
Method& Max $Ra$ for convergence\\
\hline
AA-Picard-Newton: m=3 & $75000$\\
AA-Picard-Newton: m=1 & $125000$\\
Picard-Newton&$20000$	\\
Picard&$1000$	\\
Newton&$5000$	\\
\hline
\end{tabular}
\caption{Shown above is the maximum $Ra$ (to the nearest 100) for which each method converges.}
\end{table}

\begin{figure}[H]
\centering
\includegraphics[scale=.19]{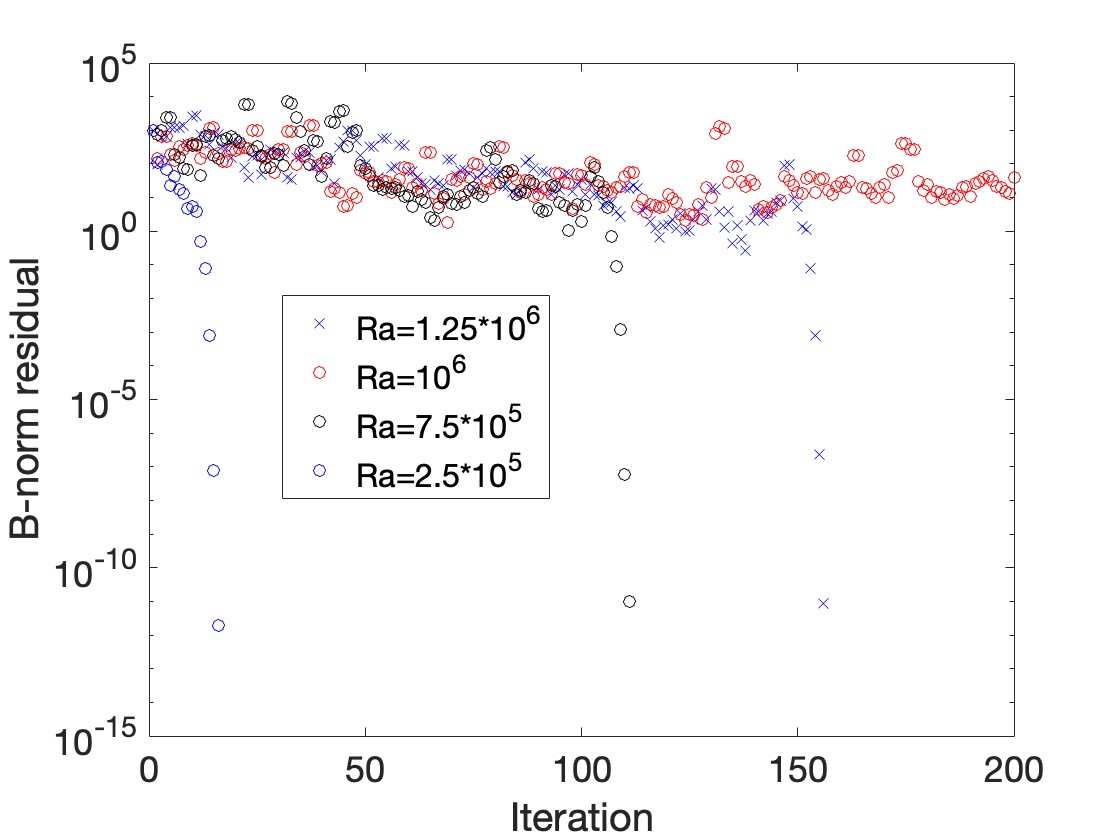}
\includegraphics[scale=.19]{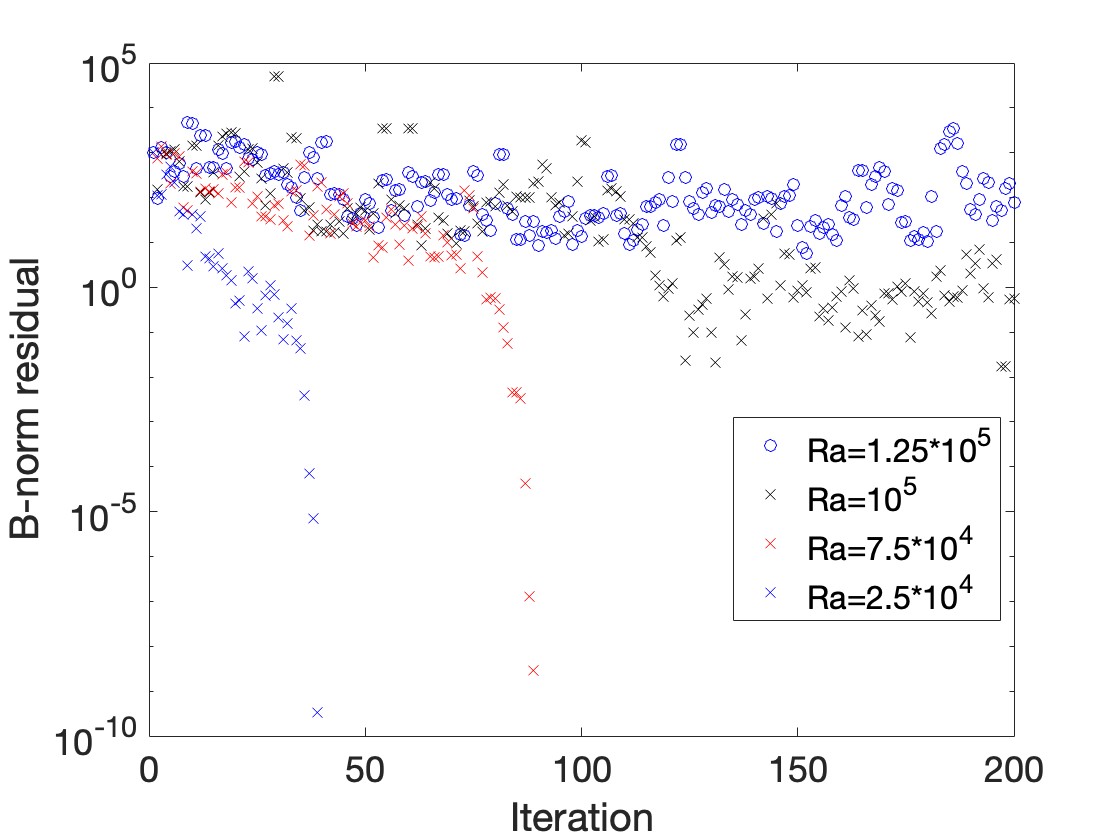}
\caption{Shown above are convergence plots for AA-Picard-Newton convergence with $m=1$ (left) and $m=3$ (right) at varying $Ra$.}\label{P2AA}
\end{figure}


\section{Conclusion}

(AA-)Picard-Newton for the non-isothermal flow system is an improvement upon the Newton iteration that provides less restrictive (sufficient) conditions for quadratic convergence, and dramatically better convergence results on the benchmark problems. The sufficient conditions of Picard-Newton incorporate the accuracy bounds of the Picard step, which leads to a scaling of the sufficient conditions of Newton by a constant less than $1$. Furthermore, the sufficient conditions of Picard-Newton depend solely upon the fluid velocity $u^k$ due to the preconditioning by Picard causing accuracy of $T^k$ to depend on $u^k$.  This is in stark contrast to the usual Newton whose sufficient conditions depends upon both $u^k$ and $T^k$. Furthermore, Picard-Newton is stable for any initial guess, provided the data satisfy a smallness condition that also implies uniqueness of the PDE system. These analytical results are extendable to AA-Picard-Newton with extra technical difficulties.\\

This improved theory is reflected in the numerical tests where for a simple domain we see convergence for high $Ra$ and errors that remain stable. In comparison, Newton diverges for relatively low $Ra$ and Picard converges slower for the same $Ra$. This is further shown in the non-isothermal flow system on a complex domain which is comparatively harder. (AA-)Picard-Newton converges for high $Ra$ and is proved stable, while Picard and Newton either converges slower or fail to converge for these same $Ra$.


\section{Acknowledgements}
The author was partially supported by NSF grant DMS 2011490.\\ \ \\
The author thanks Professor Leo Rebholz for helpful discussions regarding this work.

\bibliographystyle{plain}
\bibliography{graddiv}

\end{document}